\renewcommand{\hbar}{\hslash}		
\newtheorem{Theorem}{Theorem}
\newtheorem{Proposition}[Theorem]{Proposition}
\newtheorem{Lemma}[Theorem]{Lemma}
\newtheorem{Remark}[Theorem]{Remark}
\newtheorem{Conjecture}{Conjecture}
\newtheorem{Definition}{Definition}
\renewcommand{\subset}{\subseteq}			
\newcommand{\1}{\mathds 1}					
\newcommand{\Z}{\mathbb{Z}}					
\newcommand{\Q}{\mathbb{Q}}					
\newcommand{\R}{\mathbb{R}}					
\newcommand{\Cx}{\mathbb{C}}				
\DeclareMathOperator{\Span}{Span}
\DeclareMathOperator{\SL}{SL}
\DeclareMathOperator{\SU}{SU}
\DeclareMathOperator{\Tan}{T}					
\DeclareMathOperator{\dom}{dom}			
\newcommand{\rec}{\frac{1}}						
\newcommand{\inv}[2][1]{#2^{-#1}}					
\DeclareMathOperator{\ad}{ad}		
\newcommand{\contr}{\! \cdot \!}			
\NewDocumentCommand{\dert}{o m}{\dif \IfNoValueF{#1}{^{#1}}_{#2}}																						
\NewDocumentCommand{\derp}{o m}{\partial\IfNoValueF{#1}{^{#1}}_{#2}}																				
\NewDocumentCommand{\dbydt}{o m G{}}{\frac{\dif \IfNoValueF{#1}{^{#1}} {#3}}{\dif {{#2}}\IfNoValueF{#1}{^{#1}}}}						
\NewDocumentCommand{\dbydp}{o m G{}}{\frac{\partial\IfNoValueF{#1}{^{#1}} {#3}}{\partial { {#2} }\IfNoValueF{#1}{^{#1}}}}	
\let\Re\relax
\DeclareMathOperator{\Re}{Re}
\let\Im\relax
\DeclareMathOperator{\Im}{Im}
\newsavebox\CBox
\newcommand{\Hilb}{\mathcal H}								
\newcommand{\smth}[1][\infty]{\mathcal C^#1}			
\newcommand{\Schw}{\mathcal S}							
\DeclareMathOperator{\Lint}{L}									
\newcommand{\ModSp}{\mathcal M}							
\newcommand{\Lqnt}{\mathscr L}										
\newcommand{\Teich}{\mathcal T}												
\newcommand{\torus}{\mathbb{T}^2}											
\newcommand{\Nabla}{\boldsymbol \nabla}								
\newcommand{\nablatr}{\nabla^{\textup{Tr}}}
\newcommand{\HWC}{\tilde \Nabla{}}										
\newcommand{\opalg}{\mathcal{A}}
\newcommand{\localg}{\opalg_{\text{loc}}}
\newcommand{\opideal}[1][\knot]{\mathcal{I} (#1)}
\newcommand{\locideal}[1][\knot]{\mathcal{I}_{\text{loc}} (#1)}
\newcommand{\AKinv}[1][\knot]{\chi_{#1}}
\newcommand{\teichUhat}{\widehat{U}_{\sigma}}
\newcommand{\teichVhat}{\widehat{V}_{\sigma}}
\newcommand{\teichmhat}{\widehat{m}_{\sigma}}
\newcommand{\teichellhat}{\widehat{\ell}_{\sigma}}
\newcommand{\Uhat}{\widehat{U}}
\newcommand{\Vhat}{\widehat{V}}
\newcommand{\mhat}{\widehat{m}}
\newcommand{\ellhat}{\widehat{\ell}}
\newcommand{\AN}{\mathbb{A}_N}
\newcommand{\ANC}{\AN^{\Cx}}
\newcommand{\xn}{\mathbf{x}}
\newcommand{\yn}{\mathbf{y}}
\newcommand{\qb}{\mathrm{b}}
\newcommand{\cb}{c_{\qb}}
\newcommand{\AKJ}[1][M,K]{J^{(\qb,N)}_{#1}}
\newcommand{\AKJs}[1][\knot]{J_{#1}}
\newcommand{\dilog}{\mathrm{D}_{\qb}}
\newcommand{\philog}{\varphi_{\qb}}
\newcommand{\lx}{\widehat{\ell}_{\xn}}
\newcommand{\mx}{\widehat{m}_{\xn}}
\newcommand{\ly}{\widehat{\ell}_{\yn}}
\newcommand{\my}{\widehat{m}_{\yn}}
\title{A Geometric Quantisation view on\\ the AJ-conjecture for the Teichmüller TQFT}
\author{Jørgen Ellegaard Andersen and Alessandro Malusà}
\date{}
\begin{document} 

\maketitle

\begin{abstract}
	We provide a Geometric Quantisation formulation of the AJ-conjecture for the Teichm\"{u}ller TQFT, and we prove it in detail in the case of the knot complements of $4_{1}$ and $5_2$.
	The conjecture states that the level-$N$ Andersen-Kashaev invariant is annihilated by the non-homogeneous $\hat{A}$-polynomial, evaluated at appropriate $q$-commutative operators.
	We obtained the latter via Geometric Quantisation on the moduli space of flat $\SL(2,\Cx)$-connections on a genus-$1$ surface, by considering the holonomy functions associated to a meridian and longitude.
	The construction depends on a parameter $\sigma$ in the Teichm\"{u}ller space in a way measured by the Hitchin-Witten connection, but we show that the resulting quantum operators are covariantly constant.
	Their action on the Andersen-Kashaev invariant is then defined via a trivialisation of the Hitchin-Witten connection and the Weil-Gel'Fand-Zak transform.
\end{abstract}

\tableofcontents

	\section{Introduction}
		In this paper we consider the $\SL(2,\Cx)$--Chern-Simons theory and the interplay between its formulations via geometric quantisation~\cite{Hit90,ADPW91,Wit89} and the Teichmüller TQFT~\cite{AK14b}.
		Our starting point is the problem of quantising functions on the moduli space of flat connections on a closed oriented surface of genus one, particularly the $A$-polynomial of a knot $K$ embedded in a $3$-dimensional manifold $M$.
		Direct geometric quantisation, however, immediately gives rise to the usual issues.
		First, the pre-quantisation of these functions is in most cases incompatible with the polarisations in use.
		Nonetheless, we find that the holonomy functions associated with a longitude and meridian behave nicely enough to give rise to a pair of $q$-commuting quantum operators $\ellhat$ and $\mhat$.
		Attempting to combine these into a quantisation of polynomial functions would of course cause the usual ordering issues, so we turn to a more indirect approach.
		Namely, we use the so-called Weil-Gel'fand-Zak transform to turn $\ellhat$ and $\mhat$ into operators acting on $\AKJ$, a minor transform of the Andersen-Kashaev partition function on the knot complement $M \setminus K$.
		We are then in the perfect setting to consider the AJ-conjecture~\cite{BDP14,Dim13,Dim15,FGL02,Gar04,Guk05} and give a geometric quantisation formulation of it, specifically for the Teichmüller TQFT.
		Previously proposed for various other versions of the Chern-Simons partition function, the conjecture predicts that the latter is annihilated by an operator which, in the appropriate limit, reproduces the $A$-polynomial.
		This suggests that the desired quantisation should then be obtained as a preferred generator of the annihilator of $\AKJ$ in the algebra of ($q$-commutative) polynomials in $\ellhat$ and $\mhat$.
		We give a precise algebraic definition of such a generator and speculate, following the existing versions of the AJ-conjecture, that the resulting expressions agree with those already found in the literature.
		We carry out the full construction and computation for the first two hyperbolic knots, $4_{1}$ and $5_{2}$, using reduction to find the polynomials.
		We additionally include a detailed account of the convergence of the relevant integrals and a proof that the operators thus constructed annihilate $\AKJ$ as expected.
		
		This work contributes in several ways to the mathematical discussion of quantum $\SL(2,\Cx)$--Chern-Simons theory.
		It brings forth evidence of the little understood relation between two different formulations, extending them both by complementing each other.
		On the one hand, we obtain a precise algebraic statement for the AJ-conjecture within the purely mathematical framework of the Teichmüller TQFT.
		To this, the geometric quantisation side contributes a new interpretation of the $q$-commuting pair, deriving it rigorously and directly from the holonomy functions they are associated with.
		On the other hand, this approach offers candidates for the quantisation of $A$-polynomials as functions on the moduli space of flat connection on a torus, something which geometric quantisation alone does not seem to be able to produce.
		What is more, it strengthens the role of the Weil-Gel'fand-Zak transform, formerly introduced as a bridge between these two approaches.
		While much remains to be understood in those regards, the results of our work show further evidence that this transform will likely play a central role in proving their equivalence.
		
		Let us now provide further background for the sake of context, without attempting to give a detailed historical account.
		
		\subsection{Geometric Quantisation}
		
		The geometric quantisation approach to Chern-Simons theory was proposed by Hitchin~\cite{Hit90} and Axelord-Della Pietra-Witten~\cite{ADPW91} for $\SU(2)$ and by Witten~\cite{Wit91} for $\SL(2,\Cx)$.
		Given a smooth oriented $3$-manifold $X$, possibly with boundary, the classical solutions (i.e. flat connections) form a moduli space which, however, lacks the necessary structure to carry out geometric quantisation.
		For a closed oriented smooth \emph{surface}, on the other hand, the resulting moduli space has a natural symplectic form and pre-quantum line bundle, both tightly related to Chern-Simons theory~\cite{Fre95}.
		This space also comes with a family of polarisations parametrised by the Teichmüller space $\Teich$, but there is no preferred way to choose one in particular.
		Therefore, the quantisation procedures result in vector bundles over $\Teich$, whose fibres are identified (up to rescaling) by the holonomies of appropriate projectively flat connections.
		The latter carry the name of Hitchin and Hitchin-Witten in the respective cases of $\SU(2)$ and $\SL(2,\Cx)$; both have been studied extensively, and they play a pivotal role in Chern-Simons theory, geometric quantisation, and related quantisation schemes~\cite{And05,And06,And10,And12,AG11,AG14,AGL12,AM19,AMR24,AM16,AM23,AN16,Lau10,Mal18,Mar16,Woo92}.
		
		Returning to the geometry of the moduli spaces, the pull-back mapping induced by restricting a connection on $X$ to $\Sigma \coloneqq \partial X$ turns out to be Lagrangian~\cite{Fre95}.
		Said slightly differently, the locus of flat connections on $\Sigma$ which extend to the bulk of $X$ defines a Lagrangian subvariety.
		One may then attempt to quantise this object in place of flat connections on $X$ themselves.
		
		In the setting above, if $X$ is the exterior of an embedded knot $K$ in a \emph{closed} oriented $3$-manifold $M$, then $\Sigma$ is a connected surface of genus one---the boundary of a tubular neighbourhood of $K$.
		The holonomy functions $\ell$ and $m$ associated to a longitude and meridian define global coordinates on the moduli space for $\SL(2,\Cx)$, and (the Zariski closure of) the Lagrangian of interest is cut by a single function $A \in \Z[\ell, m]$---the $A$-polynomial of $K$~\cite{CCGLS94}.
		
		The goal is then to quantise this function $A$ in the case of $\SL(2,\Cx)$.
		As previously mentioned, we do not do this directly, but we start by considering the operators associated to the \emph{logarithmic} holonomy functions $U$ and $V$.
		
		\begin{Theorem}[See Theorems~\ref{thm:teichUVhat}, \ref{thm:normalops}, \ref{thm:nosigmaops}]
			Let $\Teich$ denote the Teichmüller space of a closed oriented surface of genus $1$, and $U$, $V$ the logarithmic holonomies along a meridian and longitude, viewed as functions on the moduli space of flat $\SL(2,\Cx)$-connections.
			For every $\sigma \in \Teich$, the pre-quantum operators defined by $U$ and $V$ preserve the real polarisation $P_{\sigma}$ associated to $\sigma$ as defined by Witten~\cite{Wit91}.
			The resulting \emph{quantum} operators $\teichUhat$ and $\teichVhat$ have central commutator, and moreover they are normal and covariantly constant with respect to the Hitchin-Witten connection.
		\end{Theorem}
		
		This allows us to define operators $\teichmhat$ and $\teichellhat$ as the exponentials of $\teichUhat$ and $\teichVhat$.
		These form a covariantly constant $\Teich$-family of $q$-commutative pairs, each compatible with the respective polarisation.
		We proceed by using a trivialisation of the Hitchin-Witten connection, whose existence and explicit expression was suggested by Witten~\cite{Wit91} and discussed in~\cite{Mal18} (see Section~\ref{subsec:trivialisation})---it was also used in~\cite{AM23} to understand the $\SL(2,\mathbb{C})$ quantum representations of the mapping class groups.
		
		\begin{Theorem}[See Theorem~\ref{thm:nosigmaops}]
			\label{thm:operators_intro}
			The operators $\mhat$ and $\ellhat$, defined by conjugating $\exp(\teichUhat)$ and $\exp(\teichVhat)$ by the trivialisation of the Hitchin-Witten connection, are independent of $\sigma$ and form a $q$-commutative pair for some appropriate $q$.
		\end{Theorem}
		
		\subsection{AJ-conjecture and partition functions}
		
		The AJ-conjecture, in its version for $\SU(2)$, brings together two knot invariants of rather different origin---the $A$-polynomial discussed above and the coloured Jones polynomial.
		Roughly speaking, the conjecture states that the latter is annihilated by an operator $\widehat{A}_{q,K}$, expressed a $q$-commutative polynomial, which in an appropriate sense is minimal and reproduces the $A$-polynomial when $q = 1$.
		Early works on this include, in no particular order, those of Frohman-Gelca-Lofaro~\cite{FGL02}, Garoufalidis~\cite{Gar04}, and~\cite{Guk05}.
		In~\cite{FGL02}, the authors construct a non-commutative analogue of the $A$-polynomial by replacing the coordinate rings of character varieties with Kauffman bracket skein modules.
		They show their invariant is orthogonal to the coloured Jones polynomial under the natural pairing, and speculate whether its containment in the annihilator is proper.
		In~\cite{Gar04}, Garoufalidis studies the coloured Jones polynomial in terms of recursive relations, and formulates his version of the conjecture in terms of a preferred generator of the ideal of such relations, a viewpoint further developed in~\cite{GL05,GL16,GLL18,Le06}.
		The approach proposed by Gukov~\cite{Guk05} and later expanded e.g. in~\cite{BDP14,DGG14,Dim13,Dim15,DGLZ09}, on the other hand, is in terms of the partition function of (analytically continued) Chern-Simons theory.
		In that context, the equation $A(\ell, m) = 0$ represents the classical configurations (i.e. flat connections) on a knot complement $M$ as sitting inside those on the boundary torus.
		Analogously, Gukov views the partition function $Z$ of $M$ as the wave function of a state on the boundary, which one should expect to satisfy a relation of the form $\widehat{A} Z = 0$ quantising the classical equation $A(\ell, m) = 0$.
		The relation with the coloured Jones naturally follows from Witten's interpretation of it as the partition function for the $\SU(2)$-theory~\cite{Wit89}.
		
		Although Witten's work on the coloured Jones polynomial was originally based on path-integral methods, his results were later made mathematically rigorous by Reshetikhin-Turaev~\cite{RT90,RT91} in terms of category theory and TQFT's.
		The equivalence of this partition-function approach to geometric quantisation was later established by a chain of isomorphisms due to Andersen-Ueno~\cite{AU1,AU2,AU3,AU4} and Laszlo~\cite{Las98}.
		
		For non-compact groups, such as $\SL(2,\Cx)$ and $\SL(2,\R)$, the situation is more involved.
		Numerous constructions of partition functions have been proposed with varying degrees of generality, using both path-integral techniques and more mathematically rigorous methods.
		Without discussing specific details, some examples in no particular order include~\cite{AK14a,AK14b,AK14c,BB04,BB07,BDP14,DFM11,Dim13,Dim15,DGLZ09,Guk05,Hik01,Hik07}.
		Most of these approaches are based on Faddeev's quantum dilogarithm (\textsection~\ref{subsec:ANdilog}) and some version of gluing formula, and in the examples that have been explicitly carried out they seem to essentially agree.
		Despite the similarities, however, the exact relation between all these different approaches, as well as with geometric quantisation, is not yet fully understood, to the best of these authors' knowledge.
		Nonetheless, Gukov's argument for the AJ-conjecture is general enough that it should apply to any partition-function model of Chern-Simons theory, regardless of the specifics of its construction.
		
		Of particular inspiration for our work are the methods of Dimofte~\cite{Dim13} and Beem-Dimofte-Pasquetti~\cite{BDP14}, based on ideal triangulations of knot complements.
		Besides producing their own version of the partition function, they also propose a plan for quantising the $A$-polynomial directly from gluing data.
		On the classical side, to the gluing of ideal tetrahedra corresponds symplectic reduction on the space of flat connections on the boundary, and the equation of the Lagrangian associated to the bordism can be obtained from those of the individual tetrahedra by an elimination and evaluation process.
		On the other hand, recognising the Faddeev difference operator~\eqref{eq:FaddeevEqn} as a quantisation of the Lagrangian of a single tetrahedron, Dimofte proposes using $q$-commutative elimination theory to mimic the same process as a quantisation of the $A$-polynomial.
		The construction is carried out explicitly in the cited works for some of the first few knots, and the appropriate conjectures are verified there for those cases.
		
		Our next goal is then to use similar elimination techniques and the operators discussed above to find relations on the partition function of the Teichmüller TQFT.
		For context, the latter is expressed as a functor between suitably defined categoroids of decorated $(2+1)$-cobordisms and infinite-rank topological vector spaces.
		After its first formulation in~\cite{AK14a}, dependent of a single quantum parameter $\qb$, it was further extended in~\cite{AK14b} to include a second parameter $N$, called its level---see~\cite{AM16} for more details.
		What is most important to know for this work is that the theory assigns a complex number, dependent of the pair $(\qb,N)$ of quantum parameters, to every hyperbolic knot $K$ embedded in a closed oriented $3$-manifold $M$.
		This object---the partition function of the theory---is conjecturally equivalent to the aforementioned $\AKJ$, which is a complex-valued entire function on the space $\AN \coloneqq \R \times \Z \slash N\Z$.
		Importantly, $\AKJ$ is expected to share several fundamental properties with the coloured Jones polynomial, further strengthening its role as an $\SL(2,\Cx)$ analogue of it.
		The precise statements are discussed in detail in op.~cit.; the relevant conjectures are verified therein for the knot complements of $4_{1}$ and $5_{2}$, for $6_{1}$ (with $N = 1$) by Andersen-Nissen~\cite{AN16}.
		Later works of Ben Aribi-Piquet-Nakazawa~\cite{BP19} and Ben Aribi-Guéritaud-Piguet-Nakazawa~\cite{BGP23} extended their results to all twist knots.
		
		The key ingredient in reconnecting this viewpoint to that of geometric quantisation is the so-called Weil-Gel'fand-Zak transform~\cite{AK14b}.
		This map identifies Schwartz-class functions on $\AN$ with smooth sections of the level-$N$ Chern-Simons line bundle for a genus-one closed oriented surface.
		As it happens, the transform is also compatible with the natural $\Lint^{2}$ norms on the two spaces, and therefore extends to a unitary isomorphism between their closures.
		In particular, we can use this to translate our $\mhat$ and $\ellhat$ from Theorem~\ref{thm:operators_intro} into operators acting on functions on $\AN$.
		
		\begin{Theorem}[See Lemma~\ref{lemma:conjugation}, Theorem~\ref{thm:conjugation}]
			Under conjugation by the Weil-Gel'fand-Zak transform, $\mhat$ and $\ellhat$ correspond to the operators $\mx$ and $\lx$ defined by
			\begin{equation}
				\begin{gathered}
					\bigl( \mx f \bigl) (x,n) \coloneqq e^{-2\pi \frac{\qb x}{\sqrt{N}}} e^{2\pi i \frac{n}{N}} f(x,n) \, , \\
					\bigl( \lx f \bigr) (x,n) \coloneqq f \biggl( x - \frac{i\qb}{\sqrt{N}}, n+1 \biggr) \, .
				\end{gathered}
			\end{equation}
		\end{Theorem}
		
		Crucially, $\mx$ and $\lx$ are the fundamental constituents of the Faddeev difference operator~\eqref{eq:FaddeevEqn}.
		They are also the same operators used by Dimofte~\cite{Dim13,Dim15} as the building blocks for the construction of his version of the quantum $\widehat{A}$-polynomial.
		
		In order to formulate our conjecture precisely, we shall take an algebraic approach similar to that of~\cite{Gar04}.
		Namely, we consider the algebra $\mathcal{A}$ formally generated over $\Z[q^{\pm \frac{1}{2}}]$ by two $q$-commuting elements $E$ and $Q$.
		The operators $\lx$ and $\mx$ then define a representation of $\mathcal{A}$, and we may consider the annihilator of $\AKJ$.
		If this left ideal is non-trivial it contains a preferred generator (in a sense discussed below), which we shall call the $\widehat{A}^{\Cx}$-polynomial.
		
		\begin{Conjecture}
			\label{conj:complexAJ}
			Let $K \subset M$ be a hyperbolic knot inside a closed oriented $3$-manifold.
			Then the ideal annihilating $\AKJ$ is non-trivial.
			The resulting $\widehat{A}^{\Cx}$-polynomial agrees with the non-homogeneous $\widehat{A}$-polynomial from the coloured Jones theory (see \textsection~\ref{sec:alg_setup}) up to a right factor in $Q$, and it reproduces the classical $A$-polynomial in the evaluation at $q = 1$, $E = \ell$, and $Q = m^{2}$, again up to a factor in $m$.
		\end{Conjecture}
		
		As highlighted above, our statement is very close in spirit to the works of Gukov~\cite{Guk05} and Dimofte~\cite{Dim13,Dim15}.
		In fact, in the final part of this article we will use the same $q$-commutative elimination process as Dimofte to produce a guess for the $\widehat{A}^{\Cx}$-polynomial.
		In contrast to his \emph{a priori} approach, however, we will work indirectly from the explicit expression of $\AKJ$.
		Since we are using the same operators and the partition functions match, the algebraic manipulations will result in the same non-commutative polynomials, which do agree with the non-homogeneous $\widehat{A}$ from the coloured Jones theory.
		In order to conclude that the polynomials thus obtained annihilate $\AKJ$ as expected, we will need to carefully carry out some further analytic checks.
		After doing so, we can conclude the following.
		
		\begin{Theorem}
			\label{thm:main_intro}
			Conjecture~\ref{conj:complexAJ} holds for the figure-eight knot $4_{1}$, and $5_{2}$.
		\end{Theorem}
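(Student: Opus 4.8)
The proof is by explicit computation, carried out separately for the two knots. The plan is to determine, in each case, the annihilating ideal $\mathcal{I} \bigl( J^{(\qb,N)}_{M,K} \bigr)$ inside $\opalg$, to read off from it the polynomial $\widehat{A}^{\Cx}_{K}$, and finally to compare this with the $\widehat{A}$-polynomial known from the literature and with the classical $A$-polynomial. The starting point is the explicit integral formula for the invariant: for both $4_{1}$ and $5_{2}$ the function $J^{(\qb,N)}_{M,K}$ on $\ANC$ is given by a (multiple) integral whose integrand is a product of level-$N$ quantum dilogarithms $\dilog$ and a Gaussian factor, with the external variable $(x,n)$ entering linearly in the arguments. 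Against this, Theorem~\ref{thm:AN_operators} tells us exactly how $\mx$ and $\lx$ act: $\mx$ is a multiplication operator, while $\lx$ shifts the continuous variable by $-i\qb/\sqrt{N}$ and the discrete one by $1$.

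The heart of the argument is to turn the functional equation of $\dilog$ into a difference equation for the invariant. Applying $\lx$ amounts to shifting the integrand's arguments by exactly the quantum-dilogarithm step, so each factor $\dilog$ can be rewritten, via its inversion and difference relations, as an elementary rational factor times the unshifted $\dilog$. After collecting these factors and performing a compensating shift of the integration contour --- legitimate provided no poles are crossed, which must be checked --- one obtains a linear relation, with coefficients that are Laurent polynomials in $\mx$, among finitely many powers of $\lx$ applied to $J^{(\qb,N)}_{M,K}$. This relation is precisely an element of $\mathcal{I}\bigl(J^{(\qb,N)}_{M,K}\bigr)$; clearing denominators and normalising yields an explicit annihilating operator.

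It then remains to verify that this operator is the distinguished generator of the ideal, i.e.\ that it has minimal $\lx$-degree and the correct content, so that it qualifies as $\widehat{A}^{\Cx}_{K}$. With $\widehat{A}^{\Cx}_{K}$ in hand, I would compare it directly with the recursion operator $\widehat{A}_{K}$ for the coloured Jones function of $4_{1}$ and $5_{2}$, which is explicitly recorded in the AJ-conjecture literature; the comparison should exhibit $\widehat{A}^{\Cx}_{K}$ as $\widehat{A}_{K}$ multiplied on the right by a factor linear in $\mx$, as predicted. Finally, I would take the classical limit $q \to 1$, in which the operators commute and reduce to the holonomy functions $m$ and $\ell$; the polynomial $\widehat{A}^{\Cx}_{K}$ then degenerates to a commutative Laurent polynomial, which I match against the classical $A$-polynomial of the two knots, completing the verification of Conjecture~\ref{conj:complexAJ}.

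The main obstacle lies in the $5_{2}$ case. There the integral representation is genuinely higher-dimensional, so that the difference equation must be produced from the simultaneous use of several functional relations, and the contour shifts involve more than one variable; controlling convergence and ensuring that no residues are picked up is considerably more delicate than for the one-dimensional figure-eight integral. A secondary difficulty is the bookkeeping required to establish minimality of the generator and to pin down the precise linear-in-$\mx$ right factor relating $\widehat{A}^{\Cx}_{K}$ to $\widehat{A}_{K}$, rather than merely exhibiting some common multiple.
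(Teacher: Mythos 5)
Your overall strategy---extract a difference equation for the invariant from Faddeev's equation for the quantum dilogarithm, justify it by contour analysis, and compare the resulting generator with the Garoufalidis--Sun polynomials and the classical $A$-polynomial---is the same as the paper's. But there is a genuine gap at the central step. When you apply $\lx$ to the integrand, Faddeev's equation produces rational factors depending not only on $\mx$ but also on $\my$, i.e.\ on the integration variable $\yn$: for $4_{1}$ the integrand gets multiplied by $\bigl(1+q^{-1/2}\mx^{-1}\my\bigr)\,q\,\mx^{2}\my^{-2}$. No single ``compensating shift of the integration contour'' removes this $\my$-dependence, since shifting the contour corresponds to applying $\ly$, which by the same functional equations produces further $\my$-dependent factors. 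So what you obtain at this stage is \emph{not} a linear relation among powers of $\lx$ applied to $J^{(\qb,N)}_{M,K}$ with coefficients in $\mx$ alone, and your proposal omits the mechanism that makes it into one. The paper's solution is to work with all four operators $\mx,\my,\lx,\ly$, record the two relations $g_{1}$ (from $\lx$) and $g_{2}$ (from $\ly$) annihilating the two-variable integrand, and then carry out a non-commutative \emph{elimination of $\my$} (computer-assisted, in Singular), producing a left combination $a_{1}g_{1}+a_{2}g_{2}$ equal to a polynomial in $\mx$ times an operator free of $\my$. Only after $\my$ has been eliminated can one invoke contour-independence to evaluate at $\ly=1$ and pull the remaining operator outside the integral. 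This creative-telescoping step is the heart of the computation and cannot be bypassed.

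Two smaller points. First, in the paper both invariants are one-dimensional integrals (for $5_{2}$ the integrand is $\Braket{\yn}\Braket{\xn}^{-1}/\bigl(\philog(\yn+\xn)\philog(\yn)\philog(\yn-\xn)\bigr)$, a single $\yn$-integral); the added difficulty for $5_{2}$ is the higher degree in $\my$ and $\ly$ entering the elimination, not a higher-dimensional contour. Second, you defer the proof that your operator is the minimal-degree generator $\widehat{A}^{\Cx}_{t,K}$ as ``bookkeeping,'' but it requires an actual argument. The paper's is short and essential: any factorisation of the operator found through the true generator specialises at $q=1$ to a factorisation of $(m^{4}-1)A_{K}(m,\ell)$, and irreducibility of the classical $A$-polynomial forces the cofactor to be a polynomial in $\mx$ alone---the alternative would force the invariant itself to vanish, contradicting its known asymptotic behaviour. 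You should supply this (or an equivalent) minimality argument explicitly.
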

		
		To give a sense of the procedure, for the two knots in question and $\xn \in \AN$ the function $\AKJ$ takes the form
		\begin{equation}
			\AKJ(\xn) = \int_{\AN} \Phi (\xn, \yn) \dif \yn
		\end{equation}
		for some meromorphic function $\Phi$ defined on $\ANC \times \ANC$, $\ANC \coloneqq \Cx \oplus \Z \slash N\Z$.
		To this corresponds an annihilator in the obvious representation of $\localg^{\otimes 2}$, and its structure makes it straightforward to find generators of this ideal.
		It is intuitively clear here that the action of $\mx$ and $\lx$ should commute with integration, while $\ly$ essentially amounts to a change of variable.
		It should then follow that any element in $\localg^{\otimes 2}$ which annihilates $\Phi$ and does not contain $\my$ will also annihilate $\AKJ$ after evaluation at $\ly = 1$.
		Once again, this is consistent with the elimination/evaluation procedure of Dimofte, although differently motivated.
		
		Upon closer inspection on $\Phi$ and its integral, however, one sees that its convergence may not hold when $\xn$ or the integration contour are shifted in the imaginary direction.
		To wok around this, we will introduce new integration contours $\gamma_{h,a}$, labelled by appropriate parameters and stable under shifts.
		We will establish convergence of the new integral whenever $\xn$ lies in appropriate regions $R_{h,a}$, and that the sum is holomorphic and agrees with $\AKJ$ for real $\xn$.
		This gives a full characterisation of the holomorphic extension of $\AKJ$ to the whole $\ANC$.
		Choosing the parameters $h$ and $a$ appropriately, the region $R_{h,a}$ will then be large enough to be stable under any set number of shifts in both variables.
		Given a polynomial in $\lx$, $\mx$, and $\ly$ annihilating the integrand, this will allow us to take each individual monomial out of the integral (after replacing $\ly$ with $1$),  thus obtain an operator which kills the partition function.
		
		\subsection*{Structure of the paper}
		
		In section~\ref{sec:background} we give an overview of the background material we refer to throughout the the rest of the work.
		This includes generalities on geometric quantisation and the Hitchin-Witten connection, the level-$N$ quantum dilogarithm and the Weil-Gel'fand-Zak transform, the Teichmüller TQFT, and some algebraic setup.
		In section~\ref{sec:setting} we define the precise structure to which we are going to apply geometric quantisation, and argue that it provides a model for (a double cover of) the moduli space relevant for genus one Chern-Simons theory.
		In section~\ref{sec:geom_quant} we actually run the geometric quantisation machinery to obtain the desired operators.
		First, we use the standard definition of the pre-quantum operators to quantise the logarithmic holonomy functions corresponding to the meridian and longitude on the torus.
		Next, we check that the operators are compatible with the chosen polarisation, thus descending to \emph{quantum} operators.
		We then show that these are normal, thus admitting well-defined exponentials.
		Finally, we use the explicit trivialisation of the Hitchin-Witten connection to remove the dependence of the operators on the Teichmüller parameter.
		After that, we determine the action of the operators on functions on $\AN$ via the Weil-Gel'fand-Zak transform.
		In section~\ref{sec:AK}, we explicitly carry out the procedure described above to find the $\widehat{A}^{\Cx}$-polynomial for the first two hyperbolic knots.
		
	\subsection*{Acknowledgements}
		
		We wish to thank Tudor Dimofte for profitable insight on the techniques used for computing the polynomial annihilating $J^{(\qb,N)}_{M,K}$, and Simone Marzioni for frequent discussion on aspects of his work on the Teichmüller TQFT.
		We also owe an acknowledgement to the developers of Singular~\cite{DGPS}, which we have used along the process of computing the polynomials.
		
		Both authors were supported in part by the Danish National Science Foundation Center of Excellence grant, Centre for Quantum Geometry of Moduli spaces, DNRF95.
		
		The first-named author is supported by the grant from the Simons foundation, Simons Collaboration on New Structures in Low-Dimensional Topology grant no. 994320, the ERC-SyG project, Recursive and Exact New Quantum Theory (ReNewQuantum) with funding from the European Research Council (ERC) under the European Union‘s Horizon 2020 research and innovation programme, grant agreement no. 810573.
		
		The second-named author, in addition, thanks for their support the University of Toronto and the University of Saskatchewan, as well as the Pacific Institute for the Mathematical Sciences (PIMS) and the Centre for Quantum Topology and its Applications (quanTA).

	\section{General background}
		\label{sec:background}
		
	\subsection{\texorpdfstring{$\AN$}{AN} and the level-\texorpdfstring{$N$}{N} quantum dilogarithm}
		\label{subsec:ANdilog}

		\begin{Definition}
			For every positive integer $N$, let $\AN$ be the locally compact Abelian group $\R \oplus \Z/N\Z$ endowed with the nomalised Haar measure $\dif \, (x,n)$ defined by
		\begin{equation}
			\label{eq:measure}
			\int_{\AN} f(x,n) \dif{} (x,n) := \rec{\sqrt{N}} \sum_{n = 1}^{N} \int_{\R} f(x,n) \, \dif x \, .
		\end{equation}
		We denote by $\Schw (\AN , \Cx)$ the space of Schwartz class functions on $\AN$, i.e. functions $f(x,n)$ on $\AN$ which restrict to Schwartz class functions on $\R$ for every $n$.
		We shall denote $\Cx \oplus \Z/N\Z$ by $\ANC$.
		\end{Definition}
		
		Of course $\Schw (\AN , \Cx)$ sits inside the space $\Lint^{2} (\AN , \Cx)$ of square-summable functions, as a dense subspace.
		We will often use the notation $\xn = (x,n)$; moreover, if $\lambda \in \Cx$ we write $\xn + \lambda$ as a short-hand for $(x+\lambda , n)$.
		
		As in~\cite{AK14b}, we use the following notations for Fourier Kernels and Gaussians on $\AN$:
		\begin{equation}
			\label{eq:FourierGaussian}
			\begin{gathered}
				\Braket{(x,n),(y,m)} = e^{ 2 \pi i xy} e^{- 2\pi i nm/N} \, , \\
				\Braket{(x,n)} = e^{\pi i x^2} e^{- \pi i n (n + N)/N} \, .
			\end{gathered}
		\end{equation}
		
		Fix now $\qb$, a complex unitary parameter with $\Re(\qb) > 0$ and $\Im(\qb) \geq 0$, and introduce constants
		\begin{equation}
			\cb := \frac{ i ( \qb + \inv{\qb} ) }{2} = i \Re(\qb) \, ,
				\qquad
			q^{\rec{2}} := - e^{ \pi i \frac{b^2 + 1}{N}} = \inv{ \Bigl\langle \! \Bigl( \frac{i \qb}{\sqrt{N}} , -1 \Bigr) \! \Bigr\rangle } \, .
		\end{equation}
		
		We summarise here the fundamental properties of the level $N$ quantum dilogarithm which are relevant for this work.
		For the precise definition and further details see e.g.~\cite{AK14b,AM16}.
		For $N$ a positive \emph{odd} integer, the quantum dilogarithm $\dilog$ at level $N$ and quantum parameter $\qb$ is a meromorphic function on $\ANC$ which satisfies the Faddeev difference equations
		\begin{equation}
			\label{eq:FaddeevEqn}
			\begin{aligned}
				\dilog \Bigl( x \pm \frac{i\qb}{\sqrt{N}} , n \pm 1 \Bigr) =&\ {\left( 1 - e^{\pm \frac{ \qb^{2} + 1}{N}} e^{ 2 \pi \frac{\qb}{\sqrt{N}} x} e^{2 \pi i \frac{n}{N}} \right)}^{\mp 1} \dilog (x,n) \, ,	\\[.5em]
				\dilog \Bigl( x \pm \frac{i \overline{\qb}}{\sqrt{N}} , n \mp 1 \Bigr) =&\ {\left( 1 - e^{\pm \frac{ \overline{\qb}^{2} + 1}{N}} e^{ 2 \pi \frac{\overline{\qb}}{\sqrt{N}} x} e^{-2 \pi i \frac{n}{N}} \right)}^{\mp 1} \dilog (x,n)
			\end{aligned}
		\end{equation}
		and the the inversion relation
		\begin{equation}
			\dilog (\xn) \dilog (-\xn) = \inv{\zeta_{N,\textup{inv}}} \Braket{\xn} \, , \qquad \zeta_{N,\textup{inv}} = e^{\pi i ( N + 2 \cb^2 \inv{N})/6} \, .
		\end{equation}
		
		\begin{Lemma}
			\label{lemma:dilog_asymptotics}
			For $n \in \Z/N\Z$ fixed, the quantum dilogarithm has the following asymptotic behaviour for $x \to \infty$:
			\begin{equation}
				\dilog(x,n) \approx
				\begin{cases}
					1
						& \text{on $\abs{\arg(x)} > \frac{\pi}{2} + \arg(\qb)$,}	\\[.5em]
					\inv{\zeta_{N,\textup{inv}}} \Braket{\xn}
						& \text{on $\abs{\arg(x)} < \frac{\pi}{2} - \arg(\qb)$.}
				\end{cases}
			\end{equation}
			Furthermore, the dilogarithm satisfies the unitarity relation
			\begin{equation}
				\overline{\dilog(x,n)} \dilog(\overline{x},n) = 1 \, .
			\end{equation}
		\end{Lemma}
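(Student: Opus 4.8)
The plan is to read off both asymptotics from the explicit construction of $\dilog$ in terms of Faddeev's quantum dilogarithm $\Phi_{\qb}$ recalled in~\cite{AK14b,AM16b}, for which the analogous statements are classical, and then to pass between the two sectors by means of the inversion relation. Faddeev's function satisfies $\Phi_{\qb}(z) \to 1$ as $\Re(z) \to -\infty$ and $\Phi_{\qb}(z) \to \zeta^{-1} e^{\pi i z^{2}}$, for a suitable constant $\zeta$, as $\Re(z) \to +\infty$, uniformly on horizontal strips. I would first transport this into the sector where $\dilog(x,n) \approx 1$. After the level-$N$ rescaling and the incorporation of the finite variable $n \in \Z/N\Z$, the relevant direction of convergence is governed not by the sign of $\Re(x)$ but by that of $\Re(\overline{\qb}\,x)$; this is exactly what tilts the cone $\abs{\arg(x)} > \tfrac{\pi}{2} + \arg(\qb)$ by the angle $\arg(\qb)$. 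On this cone the defining integral representation is dominated by its trivial leading term, with exponentially small correction, and a steepest-descent (equivalently, dominated-convergence) estimate carried out uniformly over the finitely many values of $n$ yields the first case.

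For the complementary sector I would avoid redoing the analysis and instead invoke the inversion relation $\dilog(\xn)\,\dilog(-\xn) = \inv{\zeta_{N,\textup{inv}}} \Braket{\xn}$. The key observation is that the map $x \mapsto -x$ interchanges the two sectors: if $\abs{\arg(x)} < \tfrac{\pi}{2} - \arg(\qb)$ then, since $\arg(-x) = \arg(x) \mp \pi$, one has $\abs{\arg(-x)} > \tfrac{\pi}{2} + \arg(\qb)$, so that $-\xn = (-x,-n)$ lies in the cone already treated and $\dilog(-\xn) \to 1$. Dividing the inversion relation by $\dilog(-\xn)$ then gives $\dilog(\xn) \approx \inv{\zeta_{N,\textup{inv}}} \Braket{\xn}$, which is precisely the second case; note that this is self-consistent, since the residual transition region $\tfrac{\pi}{2} - \arg(\qb) < \abs{\arg(x)} < \tfrac{\pi}{2} + \arg(\qb)$, where neither estimate is claimed, has width exactly $2\arg(\qb)$.

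Finally, the unitarity identity $\overline{\dilog(x,n)}\,\dilog(\overline{x},n) = 1$ I would deduce from the corresponding property of $\Phi_{\qb}$, available because $\qb$ is unitary: on the real locus $\Phi_{\qb}$ takes values of modulus one, and the continuation $\overline{\Phi_{\qb}(\overline{z})} = \Phi_{\qb}(z)^{-1}$ holds as an identity of meromorphic functions; transporting this through the level-$N$ construction and extending by meromorphicity over $\ANC$ gives the claim. The step I expect to be most delicate is the uniformity of the first asymptotic: the estimate must hold uniformly as $n$ runs over $\Z/N\Z$ and as $x \to \infty$ inside a closed sub-cone, and one must keep the contour in the integral representation away from the poles of the integrand throughout the sector, so that the steepest-descent bound degrades in a controlled way up to the boundary rays $\abs{\arg(x)} = \tfrac{\pi}{2} \pm \arg(\qb)$, where the two regimes cease to hold.
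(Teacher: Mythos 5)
The paper does not prove this lemma: it is stated as a summary of known properties of the level-$N$ quantum dilogarithm, with the proof deferred to the cited references \cite{AK14b,AM16b}. Your sketch reproduces the standard derivation given there --- transporting the two-sector asymptotics and the unitarity $\overline{\Phi_{\qb}(\overline z)} = \Phi_{\qb}(z)^{-1}$ of Faddeev's function through the level-$N$ construction, and obtaining the second sector from the first via the inversion relation, which is sound since $\abs{\arg(-x)} = \pi - \abs{\arg(x)} > \frac{\pi}{2} + \arg(\qb)$ exactly when $\abs{\arg(x)} < \frac{\pi}{2} - \arg(\qb)$. The one imprecision worth fixing is your claim that the first cone is governed by the sign of $\Re(\overline{\qb}\,x)$ alone: the condition $\Re(\overline{\qb}\,x) < 0$ describes a half-plane tilted to one side only, whereas the symmetric cone $\abs{\arg(x)} > \frac{\pi}{2} + \arg(\qb)$ is the intersection of $\Re(\overline{\qb}\,x) < 0$ with $\Re(\qb\,x) < 0$; equivalently, one must stay away from both families of rays $i\R_{\geq 0}\qb$ and $i\R_{\geq 0}\overline{\qb}$ along which the zeroes and poles accumulate.
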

	
		It is convenient to change the notation according to~\cite{AM16}, calling
		\begin{equation}
			\philog (x,n) : = \dilog (x,-n) \, .
		\end{equation}
		The zeroes and poles of $\philog$ occur at the points $\mathbf{p}_{\alpha,\beta}$ and $- \mathbf{p}_{\alpha,\beta}$ respectively, for $\alpha,\beta \in \Z_{\geq 0}$, where
		\begin{equation}
			\mathbf{p}_{\alpha,\beta} := \Bigl( - \frac{ \cb + i \alpha \qb + i \beta \overline{\qb} }{\sqrt{N}} \, , \, \alpha - \beta \Bigr) \, .
		\end{equation}
		We shall often call
		\begin{equation}
			\label{eq:T}
			T \coloneqq
			\begin{cases}
				\Set{ x \in \Cx \colon
					\Re \Bigl(\overline{b} \Bigl(x + \frac{\cb}{\sqrt{N}} \Bigr) \Bigr) \leq 0 \text{ and }
					\Re \Bigl(b \Bigl( x + \frac{\cb}{\sqrt{N}} \Bigr) \Bigr) \geq 0 }
					&\text{if $\qb \ne 1$,}	\\
				\Set{ x \in \Cx \colon \Im(x) \leq 1 \text{ and } \abs{\Re(x)} \leq 1}
					&\text{if $\qb = 1$.}
			\end{cases}
		\end{equation}
		In particular, the zeroes and poles of $\philog(x,n)$ for $n$ fixed occur only for $x \in T$ and $x \in -T$ respectively.
		Lemma~\ref{lemma:dilog_asymptotics} holds unchanged for $\philog$ in place of $\dilog$.
		
		\begin{Definition}
			\label{def:ANops}
			If $k \in \Z_{>0}$ and $\boldsymbol{\mu} : (\ANC)^{k} \to \ANC$ is a $\Z$-linear function, denote by $\widehat{m}_{\boldsymbol{\mu}}$ the operator acting on complex-valued functions on $(\ANC)^{k}$ as
			\begin{equation}
				\widehat{m}_{\boldsymbol{\mu}} f := \Bigl\langle \boldsymbol{\mu} , \Bigl( \frac{i \qb}{\sqrt{N}} , -1 \Bigr) \Bigr\rangle f \, .
			\end{equation}
			Moreover, call $\lx$ the operator acting on complex-valued functions on $\ANC$ as
			\begin{equation}
				\bigl(\lx f\bigr) (x,n) := f \Bigl( x - \frac{i \qb}{\sqrt{N}} , n+1 \Bigr) \, .
			\end{equation}
		\end{Definition}
		
		\begin{Remark}
			\label{rem:operators_domain}
			The action of $\mx$ and $\lx$ is clearly well defined on meromorphic functions, and in fact on (dense subspaces of) $\Lint^{2}(\AN, \Cx)$, with the \emph{caveat} that $i \qb \slash \sqrt{N}$ cannot be real, owing to the condition that $\Re(\qb) > 0$.
			On the one hand, this implies that the factor $\langle \xn, (i\qb/\sqrt{N}, -1) \rangle$ is unbounded on $\AN$ as it grows exponentially for $x \to -\infty$.
			Nonetheless, the domain of $\mx$ as an operator on $\Lint^{2} (\AN, \Cx)$ contains all compactly supported functions, and is therefore dense.
			On the other hand, the shift along $(-i\qb/\sqrt{N}, 1)$ does not preserve $\AN \subset \ANC$, so that, strictly speaking, $\lx$ is not well defined on $\Lint^{2} (\AN, \Cx)$.
			However, if $f \in \Lint^{2}(\AN, \Cx)$ is entire, i.e. analytic with infinite radius of convergence, then it has a unique holomorphic extension, and there is a natural way to make sense of $\lx f$.
			This occurs, for instance, whenever $f$ is the Fourier transform of a compactly supported function, in which case $\lx f$ is also square-integrable, showing that $\lx$ is also a densely defined operator on $\Lint^{2} (\AN, \Cx)$.
			In fact, a little Fourier analysis also shows that, for any $\lambda \in \Cx$, (the closure of) the shift operator along $(\lambda, 0)$ is the exponential of $\lambda \dbydt{x}$, a fact we shall use later.
		\end{Remark}
		
		The following lemma is an immediate consequence of the definitions and Faddeev's difference equation.
		
		\begin{Lemma}
			\label{lemma:opGaussDilog}
			The operator $\lx$ acts on the Gaussian and the quantum dilogarithm as
			\begin{equation}
				\begin{gathered}
				\lx \Braket{\xn} = q^{-\rec{2}} \mx^{-1} \Braket{\xn} \, , \\
					\lx \philog (\xn) = \Bigl( 1 + q^{-\rec{2}} \inv{\mx} \Bigr) \philog (\xn) \, , \\
					\inv{\lx} \philog (\xn) = \Bigl( 1 + q^{\rec{2}} \inv{\mx} \Bigr)^{-1} \philog (\xn) \, .
				\end{gathered}
			\end{equation}
			Moreover, $\lx$ and $\mx$ undergo the commutation relation
			\begin{equation}
				\lx \mx = q \mx \lx \, .
			\end{equation}
		\end{Lemma}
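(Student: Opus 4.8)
The plan is to establish all four identities by the same two-step recipe: rewrite each left-hand side explicitly from the definitions of $\lx$ and $\mx$, then recognise the resulting scalar multiplier. The one fact I would set down at the outset is the explicit form of $\mx$. Since $\mx = \widehat{m}_{\xn}$ acts as multiplication by $\Braket{(x,n),(i\qb/\sqrt{N},-1)}$, evaluating the Fourier kernel gives
\begin{equation*}
	\mx f (x,n) = e^{-2\pi \qb x/\sqrt{N}} \, e^{2\pi i n/N} f(x,n) \, ,
\end{equation*}
so that $\inv{\mx}$ is multiplication by $e^{2\pi \qb x/\sqrt{N}} e^{-2\pi i n/N}$. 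Every extra factor produced below will be matched against this expression and against the defining relation $q^{\rec{2}} = - e^{\pi i (\qb^2+1)/N}$.

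For the commutation relation I would compose the operators in both orders: applying $\lx$ after $\mx$ differs from $\mx \lx$ precisely by the factor obtained from evaluating the multiplier of $\mx$ at the shifted point $(x - i\qb/\sqrt{N}, n+1)$, and collecting the exponentials yields the scalar $e^{2\pi i (\qb^2+1)/N} = q$, which is the claim. For the Gaussian I would substitute the shift directly into $\Braket{(x,n)} = e^{\pi i x^2} e^{-\pi i n(n+N)/N}$; expanding $(x - i\qb/\sqrt{N})^2$ and the quadratic in $n$, the shifted Gaussian equals $\Braket{\xn}$ times $e^{2\pi \qb x/\sqrt{N}} e^{-2\pi i n/N}$ — which is $\inv{\mx}$ — times the constant $-e^{-\pi i(\qb^2+1)/N} = q^{-\rec{2}}$, giving $\lx \Braket{\xn} = q^{-\rec{2}} \inv{\mx} \Braket{\xn}$.

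For the two dilogarithm identities the only genuine input is Faddeev's difference equation. Using $\philog(x,n) = \dilog(x,-n)$, the shift defining $\lx$ becomes the shift $(x,-n) \mapsto (x - i\qb/\sqrt{N}, -n-1)$ on $\dilog$, i.e. the minus branch of Faddeev's equation evaluated at base point $(x,-n)$; reading off its multiplier and rewriting $e^{2\pi \qb x/\sqrt{N}} e^{-2\pi i n/N}$ as $\inv{\mx}$ produces $\lx \philog(\xn) = ( 1 + q^{-\rec{2}} \inv{\mx} ) \philog(\xn)$, and the plus branch gives the identity for $\inv{\lx}$ in the same way. I expect the main obstacle to be purely one of bookkeeping rather than of ideas: one must track the sign reversal $n \mapsto -n$ introduced when passing between $\philog$ and $\dilog$, choose the correct $\pm$ branch of Faddeev's equation accordingly, and carefully match the constants $e^{\pm \pi i (\qb^2+1)/N}$ with $q^{\pm \rec{2}}$ through the definition of $q^{\rec{2}}$.
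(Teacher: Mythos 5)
Your proposal is correct and is exactly the argument the paper intends: the paper offers no written proof, stating only that the lemma is ``an immediate consequence of the definitions and Faddeev's difference equation,'' and your computation carries this out faithfully --- the explicit multiplier of $\mx$, direct substitution of the shift into the Gaussian, and the two branches of Faddeev's equation transported through $\philog(x,n)=\dilog(x,-n)$, with the constants matching via $q^{\pm\rec{2}}=\mp e^{\pm\pi i(\qb^2+1)/N}$. All the individual identities check out as you describe.
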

		
	\subsection{Algebraic setting and \texorpdfstring{$A$}{A}-polynomials}
		\label{sec:alg_setup}
		
		For the algebraic setup of our conjecture, we shall borrow notations from Garoufalidis~\cite{Gar04} as follows.
	
		First, we consider the $q$-commutative algebra
		\begin{equation}
			\opalg := \Z[q^{\pm \frac{1}{2}}] \langle E , Q \rangle \Big/ \bigl( EQ - qQE \bigr) \, .
		\end{equation}
		One can also make sense of inverting polynomials in $Q$ and obtain
		\begin{equation}
			\label{eq:localg}
			\localg := \Set{ \sum_{k=0}^{l} a_{k} (q,Q) E^{k} \colon l \in \Z_{\geq 0} , \, a_{k} \in \Q(q,Q)}
		\end{equation}
		with the product determined by
		\begin{equation}
			\bigl( a (q,Q) E^{k} \bigr) \cdot \bigl( b(q,Q) E^{h} \bigr) := a(q,Q) b(q,q^{k}Q) E^{k+h} \, .
		\end{equation}
		
		Given a representation of $\localg$ and a vector $e$ in it, one may define the sets
		\begin{equation}
			\locideal[e] \coloneqq \Set{ p(Q,E) \in \localg \colon p(Q,E) e = 0}
			\qquad \text{and} \qquad
			\opideal[e] \coloneqq \locideal[e] \cap \opalg
		\end{equation}
		of elements annihilating $e$, each of which is a left ideal in its respective algebra.
		Since every ideal in $\localg$ is principal, there exists a unique generator of $\locideal[e]$ of minimal degree in $E$ and co-prime coefficients in $\Z[q,Q]$.
		As such, this element actually lies in $\opalg$, thus giving a preferred ``generator'' of $\opideal[e]$.
		
		One way to phrase the AJ-conjecture for the coloured Jones polynomial is to study recursive relations on it in terms of representations of $\opalg$ and $\localg$.
		This leads to a definition of the $\widehat{A}$-polynomial as the preferred generator of $\opideal[K] \coloneqq \opideal[J_{K}]$ as discussed above.
		One version of this construction leads to the so-called non-homogeneous polynomial $\widehat{A}_{q,K} (Q,E)$.
		We shall later refer to the formul\ae{} found in~\cite{GS10} for $4_{1}$ and $5_{2}$, which read
		\begin{equation}
			\label{eq:Ahat41_known}
			\begin{split}
				\widehat{A}_{q,4_{1}}^{\text{nh}} ={}& q^{2} \bigl( q^{2} Q - 1 \bigr) \bigl( q Q^{2} - 1\bigr) Q^{2} E^{2} \\
					& - \bigl( q Q - 1 \bigr) \bigl( qQ + 1 \bigr) \Bigl( q^{4} Q^{4} - q^{3} Q^{3} - q (q^2 + 1) Q^{2} - q Q + 1 \Bigr) E \\
					& + q^{2} \bigl( Q - 1 \bigr) \bigl( q^{3} Q^{2} - 1) Q^{2} \, ,
			\end{split}
		\end{equation}
		\begin{equation}
			\label{eq:Ahat52}
			\begin{split}
				\widehat{A}_{q,5_{2}}^{\text{nh}} \!={}& \big( q^3 Q - 1\big) \big(q Q^2 - 1\big) \big(q^2 Q^2 - 1\big) E^3 \\
					& + q \bigl(q^{2} Q - 1\bigr) \bigl( q Q^{2} - 1 \bigr) \bigl(q^{4} Q^{2} - 1 \bigr) \\
					& \contr \Bigl( q^9 Q^5 - q^7 Q^4 - q^4 (q^3 - q^2 - q + 1) Q^3 + q^2(q^3 + 1) Q^2 + 2q^2 Q - 1\Big) E^2 \\
					& - q^{5} Q^{2} \bigl(q Q - 1 \bigr) \bigl( q^{2} Q^{2} - 1 \bigr) \bigl(q^{5} Q^{2} - 1 \bigr) \\
					& \contr \Bigl( q^{6} Q^{5} - 2 q^{5} Q^{4} - q^{2} (q^{3} + 1) Q^{3} + q (q^{3} - q^{2} - q +1) Q^{2} + q Q - 1 \Bigr) E \\
					& + q^{9} Q^{7} \bigl( Q - 1 \bigr) \bigl( q^{4} Q^{2} - 1 \bigr) \bigl( q^{5} Q^{2} - 1 \bigr) \, .
			\end{split}
		\end{equation}
		The $A$-polynomials of these knots, known to be irreducible~\cite{HS04}, read
		\begin{equation}
			\label{eq:clA41}
			A_{4_{1}} (m , \ell) = m^{4} \ell^{2} - \Bigl( m^{8} - m^{6} - 2 m^{4} - m^{2} + 1 \Bigr) \ell + m^{4} \, ,
		\end{equation}
		\begin{equation}
			\label{eq:clA52}
			\begin{split}
				A_{5_{2}} (m , \ell) ={}& \ell^{3} + \Bigl( m^{10} - m^{8} + 2 m^{4} + 2 m^{2} - 1 \Bigr) \ell^{2} \\
					& - m^{4} \Bigl( m^{10} - 2 m^{8} - 2 m^{6} + m^{2} - 1 \Bigr) \ell + m^{14} \, .
			\end{split}
		\end{equation}
		
	\subsection{The Andersen-Kashaev theory}
		\def\knot{f}
		
		The Andersen-Kashaev theory defines an infinite-rank TQFT $\mathcal{Z}$ from quantum Teichmüller theory.
		In particular, it defines an invariant $\mathcal{Z} (X)$ for every object $X$ consisting of a closed oriented $3$-manifold $M$, a hyperbolic knot $K$, and a suitably decorated triangulation of its complement.
		However, it is conjectured~\cite{AK14a,AM16} that a two-parameter family of smooth functions $J^{(\qb,N)}_{M,K} (\xn)$ on $\AN$ exists such that
		\begin{equation}
			\mathcal{Z}_{\qb}^{(N)} (X) = e^{i \cb^{2} \phi} \int_{\AN} J^{(\qb,N)}_{M,K} (\xn) e^{i \lambda \cb x} \dif \xn \, ,
		\end{equation}
		where $\lambda$ and $\phi$ carry the information relative to the decorated triangulation, while $J^{(\qb,N)}_{M,K} (\xn)$ depends on the pair $(M,K)$ alone.
		In addition, this function is also conjectured to enjoy certain asymptotic conditions analogous to those expected from the coloured Jones polynomial, something that has been established in several particular cases~\cite{AN16,BP19,BGP23}.
		For the knots $4_{1}$ and $5_{2}$ in $S^{3}$, the expression for $J^{(\qb,N)}_{M,K}$ is found to be
		\begin{equation}
			\begin{gathered}
				J^{(\qb,N)}_{S^{3} \!,\, 4_{1}} (\xn) = e^{4 \pi i \frac{\cb x}{\sqrt{N}}} \AKinv[4_{1}] (\xn) \, , 
					\qquad
				\AKinv[4_{1}] (\xn) = \int_{\AN} \frac{ \philog (\xn - \yn) {\Braket{\yn}}^{2}}{\philog(\yn) {\Braket{\xn-\yn}}^{2}} \, \dif \yn \, , \\[1em]
				J^{(\qb,N)}_{S^{3} \!, \, 5_{2}} (\xn) = e^{2 \pi i \frac{\cb x}{\sqrt{N}}} \AKinv[5_{2}] (\xn) \, , 
					\qquad
				\AKinv[5_{2}] (\xn) = \int_{\AN} \frac{\Braket{\yn} \inv{\Braket{\xn}}}{\philog(\yn + \xn) \philog(\yn) \philog(\yn-\xn)} \, \dif \yn \, .
			\end{gathered}
		\end{equation}

\section{Setup for geometric quantisation}
	\label{sec:setting}

		In this section we shall introduce the space on which we will run geometric quantisation, as well as all the relevant notations and conventions.
		A more general version of this discussion may be found in~\cite{AMR22}.

		Throughout this paper, we will write $\torus$ to denote the real torus $S^1 \times S^1$, and $\torus_{\Cx}$ for the $2$-dimensional complex torus $\Cx^{*} \times \Cx^{*}$ containing it.
		We shall use coordinates $u,v \in \R$ on $\torus$ and $U,V \in \Cx$ on $\torus_{\Cx}$, with
		\begin{equation}
			(u,v) \mapsto \big(e^{2 \pi i u} , e^{2 \pi i v} \big) \in \torus \, ,
				\qquad
			(U,V) \mapsto \big( e^{2 \pi i U} , e^{2 \pi i V} \big) \in \torus_{\Cx} \, .
		\end{equation}
		We refer to these as the logarithmic coordinates, as opposed to the exponential coordinates
		\begin{equation}
			m = e^{2 \pi i U} \, ,
				\qquad
			\ell = e^{2 \pi i V}
		\end{equation}
		on $\torus_{\Cx}$.
		Using $u = \Re(U)$, $v = \Re(V)$, $\Im(U)$, and $\Im(V)$ as real coordinates, it makes sense to regard $\dbydp{u}$ and $\dbydp{v}$ as vector field on $\torus_{\Cx}$ as well as on $\torus$.
		
		On these spaces we consider symplectic $2$-forms
		\begin{equation}
			\omega = - 2 \pi \dif u \wedge \dif v \, ;
				\qquad
			\omega_{\Cx} = - 2 \pi \dif U \wedge \dif V \, .
		\end{equation}
		We now fix a positive integer $N$ and a real number $S$ (without further restrictions), calling $t = N + iS$ the level of the theory, and define the  level-$t$ \emph{real} symplectic structure on $\torus_{\Cx}$ as
		\begin{equation}
			\omega_{t} \coloneqq \rec{2} \Re \big( t \omega_{\Cx} \big) \, ,
		\end{equation}
		which restricts to the form $N \omega$ on $\torus$.
		A pre-quantum line bundle $\Lqnt^{(t)}$ on $(\torus_{\Cx} , \omega_{t})$ is defined by the quasi-periodicity conditions
		\begin{equation}
				\psi(U+1 , V) = e^{- \pi i \Re(t V)} \psi (U,V) \, ,
				\quad \text{and} \quad
				\psi(U , V+1) = e^{\pi i \Re(t U)} \psi (U,V) \, ,
		\end{equation}
		and connection $\nabla^{(t)} = \dif - i \theta^{(t)}$ with
		\begin{equation}
			\theta^{(t)}_{(U,V)} = \pi \Re \Big( t \big( V \dif U - U \dif V \big) \Big) \, .
		\end{equation}
		This bundle restricts to one on $(\torus,N\omega)$, which we call $\Lqnt^{N}$.
		Explicitly, the quasi-periodicity conditions and the connection form for this bundle are
		\begin{equation}
			\begin{gathered}
				\psi(u+1 , v) = e^{- N\pi i v} \psi (u,v)
				\quad \text{and} \quad
				\psi(u , v+1) = e^{N \pi i u} \psi (u,v) \, , \\
				\theta^{N}_{(u,v)} = N \pi \bigl( v \dif u - u \dif v \bigl) \, .
			\end{gathered}
		\end{equation}
		We refer to these as the Chern-Simons line bundles over $\torus_{\Cx}$ and $\torus$ at the level $t$ and $N$ respectively, and we shall often omit the superscript in the connection.
		
	\paragraph{The family of complex structures on \texorpdfstring{$\torus$}{T}.}
		Denote by $\Teich$ the upper half-plane
		\begin{equation}
			\Teich = \Set{ \sigma \in \Cx : \Im(\sigma) > 0} \, .
		\end{equation}
		To every point of $\Teich$ one can associate an almost complex structure on $\torus$ represented in the logarithmic coordinates by the constant matrix
		\begin{equation}
			\label{eq:complex_tensor}
			J := \frac{i}{\sigma - \overline{\sigma}}
			\begin{pmatrix}
				- (\sigma + \overline{\sigma})			&	2 \sigma \overline{\sigma}	\\[.5em]
				-2													&	\sigma + \overline{\sigma}
			\end{pmatrix} \, .
		\end{equation}
		It is easily checked that this defines a complex structure on $\torus$, with holomorphic and anti-holomorphic vector fields given by
		\begin{equation}
			\label{eq:holo_fields}
			\begin{gathered}
				\dbydp{\overline{w}} : = \frac{\1 + i {J}}{2} \dbydp{u}
					= \rec{\sigma - \overline{\sigma}} \left( \sigma \dbydp{u} + \dbydp{v} \right) \, , \\
				\dbydp{w} : = \frac{\1 -i {J}}{2} \dbydp{u}
					= - \rec{\sigma - \overline{\sigma}} \left( \overline{\sigma} \dbydp{u} + \dbydp{v} \right) \, ,
			\end{gathered}
		\end{equation}
		to which correspond complex coordinates
		\begin{equation}
			w = u - \sigma v \, ,
			\qquad
			\overline{w} = u - \overline{\sigma} v \, .
		\end{equation}
		For later convenience, note that $\omega$ is determined in these coordinates by
		\begin{equation}
			\omega \left( \dbydp w , \dbydp {\overline w} \right) = \rec 4 \omega \left( \dbydp u - i J \dbydp u , \dbydp u + i J \dbydp u \right) = - \frac{2 \pi}{\sigma - \overline{\sigma}} \, ,
		\end{equation}
		which implies
		\begin{equation}
			\label{eq:omegaholo}
			\bigl[ \nabla_{w} , \nabla_{\overline{w}} \bigr] = \frac{2 N \pi i}{\sigma - \overline{\sigma}} \, .
		\end{equation}
		
		Together with $\omega$, $J$ defines a Kähler structure on $\torus$ with metric
		\begin{equation}
			g =
			\frac{2 \pi i}{\sigma - \overline{\sigma}}
			\begin{pmatrix}
				2													&	- (\sigma + \overline{\sigma})			\\
				- (\sigma + \overline{\sigma})			&	2 \sigma \overline{\sigma}
			\end{pmatrix} \, ,
		\end{equation}
		whose inverse is
		\begin{equation}
			\tilde{g} = \frac{i}{2 \pi ( \sigma - \overline{\sigma})}
			\begin{pmatrix}
				2 \sigma \overline{\sigma}	&	\sigma + \overline{\sigma}	\\
				\sigma + \overline{\sigma}	&	2
			\end{pmatrix} \, .
		\end{equation}
		The Laplace operator $\Delta$, which acts on sections of $\Lqnt^{N}$ by differentiating twice and then contracting both indices with $\tilde{g}$, can be written as
		\begin{equation}
			\label{eq:laplacian}
			\Delta = - i \frac{\sigma - \overline{\sigma}}{2 \pi} \Bigl( \nabla_{w} \nabla_{\overline{w}} + \nabla_{\overline{w}} \nabla_{w} \Bigr) 
			= -i \frac{\sigma - \overline{\sigma}}{\pi} \nabla_{w} \nabla_{\overline{w}} - N \, ,
		\end{equation}
		by noticing that the metric is determined by
		\begin{equation}
			g \left( \dbydp w , \dbydp {\overline w} \right) = - i \omega \left( \dbydp w , \dbydp {\overline w} \right) = \frac{2 \pi i}{\sigma - \overline{\sigma}} \, .
		\end{equation}
		Since the coefficients of $g$ are constant functions on $\torus$, its Levi-Civita connection is trivial for all values of $\sigma$, and therefore independent of it.
		Consequently, the variation of $\Delta$ with respect to $\sigma$ is determined by that of $\tilde{g}$, which is
		\begin{equation}
			\dbydp{\sigma}{\tilde{g}} = \frac{i}{\pi (\sigma - \overline{\sigma})^{2}}
			\begin{pmatrix}
				\overline{\sigma}^{2}	&	\overline{\sigma}	\\
				\overline{\sigma}			&	1
			\end{pmatrix} \, ,
			\qquad
			\dbydp{\overline{\sigma}}{\tilde{g}} = - \frac{i}{\pi (\sigma - \overline{\sigma})^{2}}
			\begin{pmatrix}
				\sigma^{2}	&	\sigma	\\
				\sigma			&	1
			\end{pmatrix} \, .
		\end{equation}
		These two tensors are also parallel, and up to constant coefficients one can recognise them as $\dbydp{w} \otimes \dbydp{w}$ and $\dbydp{\overline{w}} \otimes \dbydp{\overline{w}}$.
		In particular, this makes $J$ holomorphic and rigid in the sense of~\cite{AG14}.
		The variation of $\Delta$ is then
		\begin{equation}
			\dbydp{\sigma} \Delta = - \frac{i}{\pi} \nabla_{w} \nabla_{w} \, ,
			\qquad
			\dbydp{\overline{\sigma}} \Delta = \frac{i}{\pi} \nabla_{\overline{w}} \nabla_{\overline{w}} \, .
		\end{equation}
		
		\paragraph{Polarisations on \texorpdfstring{$\torus_{\Cx}$}{TC} and geometric quantisation.}
		Using the natural complex structure $I$ on $\torus_{\Cx}$, the right-most expressions in~\eqref{eq:holo_fields} may also be read as \emph{real} vector fields on the complex torus.
		To avoid confusion, we shall denote these as $\overline{X}$ and $X$, respectively, although both objects are real and not the conjugate of one another.
		They span integrable distributions in $\Tan \torus_{\Cx}$ which are Lagrangian for $\omega_{\Cx}$, thus for $\omega_{t}$ for every $t$, i.e. polarisations.
		We set
		\begin{equation}
			P = P_{\sigma} := \Span \bigl\langle \overline{X}, I \overline{X} \bigr\rangle \, .
		\end{equation}
		Because each leaf of $P$ intersects $\torus \subset \torus_{\Cx}$ at exactly one point, and transversely, this subspace may be identified with the reduction $\torus_{\Cx}/P$.
		One can identify the space of smooth polarised sections of $\Lqnt^{(t)}$ over $\torus_{\Cx}$ with that of \emph{all} smooth sections of $\Lqnt^{N}$ over $\torus$, the latter supporting an $\Lint^{2}$-product via the volume form $\omega$.
		In other words, one can define the level-$t$ Hilbert space $\Hilb_{\sigma}^{(t)}$ arising from geometric quantisation on $(\torus_{\Cx} , \omega_{t})$ with pre-quantum line bundle $\Lqnt^{(t)}$ and polarisation $P_{\sigma}$ as $\Lint^{2} (\torus , \Lqnt^{N})$.
		Although $\sigma$ does not manifestly enter the definition of this last space, the dependence on this parameter should be measured via the Hitchin-Witten connection~\cite{Wit91,AG14} on the trivial bundle
		\begin{equation}
			\Hilb^{t} := \Teich \times \Lint^{2} (\torus , \Lqnt^{N}) \to \Teich \, .
		\end{equation}
		Due to the flatness of $g$, the definition of the connection simplifies to
		\begin{equation}
			\HWC_{\sigma} = \dbydp{\sigma} + \frac{i}{\pi} \nabla_{w} \nabla_{w} \, ,
			\qquad
			\HWC_{\overline{\sigma}} = \dbydp{\overline{\sigma}} - \frac{i}{\pi} \nabla_{\overline{w}} \nabla_{\overline{w}} \, .
		\end{equation}
		Although the arguments of~\cite{AG14} do not apply, $\torus$ having non-trivial holomorphic vector fields and fist cohomology, it is not difficult to show that $\HWC$ is flat in this case.
		In fact, Witten proposes the following statement.
		
		\begin{Proposition}
			\label{prop:HWCtrivialisation}
			The Hitchin-Witten connection $\HWC$ for $\torus_{\Cx}$ has a trivialisation
			\begin{equation}
				\HWC = \exp(-r \Delta) \nablatr \exp(r \Delta)
			\end{equation}
			for $r$ a complex parameter such that
			\begin{equation}
				e^{4Nr} = - \frac{\overline{t}}{t} \, .
			\end{equation}
		\end{Proposition}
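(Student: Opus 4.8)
The plan is to read the claimed identity as an equality of connections on the trivial bundle $\Hilb^{t} = \Teich \times \Lint^{2}(\torus , \Lqnt^{N})$ and to verify it component by component. Writing $g = g(\sigma) = \exp(r\Delta)$ for the ($\sigma$-dependent, operator-valued) gauge transformation, the proposed trivialisation $\HWC = \exp(-r\Delta)\,\nablatr\,\exp(r\Delta)$ unwinds, on smooth sections, into the two operator identities $\HWC_{\sigma} = \partial_{\sigma} + g^{-1}(\partial_{\sigma}g)$ and $\HWC_{\overline{\sigma}} = \partial_{\overline{\sigma}} + g^{-1}(\partial_{\overline{\sigma}}g)$, the extra summands being the gauge potential generated by conjugating the trivial connection. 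Since both components of $\HWC$ are already known explicitly, it suffices to evaluate $g^{-1}(\partial_{\sigma}g)$ and $g^{-1}(\partial_{\overline{\sigma}}g)$ in closed form and to compare; this comparison will simultaneously pin down the admissible values of $r$. I would compute these two potentials with Duhamel's formula for $\partial_{\sigma}\exp(r\Delta)$, together with the identity $\Ad_{\exp(s\Delta)} = \exp(s\,\ad_{\Delta})$.

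The crux is that $\partial_{\sigma}\Delta$ and $\partial_{\overline{\sigma}}\Delta$ are \emph{eigenvectors} of $\ad_{\Delta}$. Relation~\eqref{eq:omegaholo} makes $\nabla_{w},\nabla_{\overline{w}}$ into a Heisenberg pair with scalar central term, and a direct bracket computation then yields $[\Delta,\nabla_{w}] = -2N\nabla_{w}$, whence $\ad_{\Delta}(\nabla_{w}\nabla_{w}) = -4N\,\nabla_{w}\nabla_{w}$ and, symmetrically, $\ad_{\Delta}(\nabla_{\overline{w}}\nabla_{\overline{w}}) = +4N\,\nabla_{\overline{w}}\nabla_{\overline{w}}$. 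As $\partial_{\sigma}\Delta = -\tfrac{i}{\pi}\nabla_{w}\nabla_{w}$ and $\partial_{\overline{\sigma}}\Delta = \tfrac{i}{\pi}\nabla_{\overline{w}}\nabla_{\overline{w}}$, these are $\ad_{\Delta}$-eigenvectors with eigenvalues $\mp 4N$ respectively. Consequently the adjoint orbit stays inside a one-dimensional space, and the Duhamel integral collapses to a single scalar factor:
\begin{equation*}
	g^{-1}(\partial_{\sigma}g) = r\int_{0}^{1}\Ad_{\exp(-ur\Delta)}\bigl(\partial_{\sigma}\Delta\bigr)\,\dif u = \frac{e^{4Nr}-1}{4N}\,\partial_{\sigma}\Delta \, ,
\end{equation*}
and likewise $g^{-1}(\partial_{\overline{\sigma}}g) = \tfrac{1-e^{-4Nr}}{4N}\,\partial_{\overline{\sigma}}\Delta$. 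Beyond a scalar geometric series, no convergence question survives this step.

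It then remains to match these two scalar multiples against the potentials of $\HWC_{\sigma}$ and $\HWC_{\overline{\sigma}}$, which produces one scalar equation in $e^{4Nr}$ from each side. The point I expect to be decisive is that the holomorphic and anti-holomorphic components of the Hitchin-Witten connection carry the level \emph{asymmetrically}---through $t = N + iS$ on one side and through $\overline{t}$ on the other---so that, rather than conflicting, the two equations become compatible and collapse to the single relation $e^{4Nr} = -\overline{t}/t$; the arithmetic here hinges on $t + \overline{t} = 2N$. Since $\lvert -\overline{t}/t\rvert = 1$, the resulting $r$ is purely imaginary, so $g = \exp(r\Delta)$ is unitary, exactly as one wants for a gauge equivalence of connections on a Hilbert bundle.

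The genuine obstacle is analytic rather than algebraic. Both $\Delta$ and $\nabla_{w}\nabla_{w}$ are unbounded, $\exp(r\Delta)$ must be defined through the spectral theorem, and Duhamel's formula together with the interchange of $\Ad_{\exp(-ur\Delta)}$ and the $u$-integration has to be justified. I would address this by restricting to a common invariant core of smooth, Schwartz-class sections, on which $\ad_{\Delta}$ is diagonal with eigenvalues $\mp 4N$ on the span of $\partial_{\sigma}\Delta$ and $\partial_{\overline{\sigma}}\Delta$; there the exponential series and the Duhamel integral converge literally, the manipulations above are rigorous, and the resulting operator identity extends to the whole domain by density.
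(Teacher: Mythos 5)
Your argument is correct, and in fact it cannot be compared against an internal proof: the paper does not prove Proposition~\ref{prop:HWCtrivialisation} at all, but defers to Witten~\cite{Wit91} and to~\cite{AM18}. Your route --- Duhamel's formula for $\partial_{\sigma}\exp(r\Delta)$, collapsed by the observation that $\dbydp{\sigma}\Delta=-\tfrac{i}{\pi}\nabla_{w}\nabla_{w}$ and $\dbydp{\overline{\sigma}}\Delta=\tfrac{i}{\pi}\nabla_{\overline{w}}\nabla_{\overline{w}}$ are $\ad_{\Delta}$-eigenvectors with eigenvalues $\mp 4N$ --- is the natural one, and it reuses exactly the ingredients the paper establishes elsewhere: the Heisenberg relation~\eqref{eq:omegaholo}, the brackets $[\Delta,\nabla_{w}]=-2N\nabla_{w}$ and $[\Delta,\nabla_{\overline{w}}]=2N\nabla_{\overline{w}}$, and the identity $\Ad_{\exp(r\Delta)}=\exp(r\ad_{\Delta})$, all of which appear in the proof of the theorem on the $\sigma$-independent operators. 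I have checked your two gauge potentials $\tfrac{e^{4Nr}-1}{4N}\,\dbydp{\sigma}\Delta$ and $\tfrac{1-e^{-4Nr}}{4N}\,\dbydp{\overline{\sigma}}\Delta$, and the final matching does produce the single compatible condition $e^{4Nr}=-\overline{t}/t$ via $t+\overline{t}=2N$ (the statement's exponent $4kr$ should be read as $4Nr$, consistently with Theorem~\ref{thm:AN_operators}). One caveat you should make explicit: the matching only closes against the normalisation $\HWC_{\sigma}=\dbydp{\sigma}+\tfrac{i}{2\pi t}\nabla_{w}\nabla_{w}$, $\HWC_{\overline{\sigma}}=\dbydp{\overline{\sigma}}+\tfrac{i}{2\pi\overline{t}}\nabla_{\overline{w}}\nabla_{\overline{w}}$, which is what the paper actually uses in the proof of Theorem~\ref{thm:parallel_operators}; the displayed formula for $\HWC$ in Section~\ref{sec:setting}, with coefficients $\pm\tfrac{i}{\pi}$ and no $t$, $\overline{t}$, is inconsistent with that and would make your two scalar equations contradictory, so it must be read as a misprint. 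Your closing remarks are also sound: $\abs{-\overline{t}/t}=1$ forces $r\in i\R$, and since $\Delta$ is essentially self-adjoint on the smooth sections of $\Lqnt^{N}$ over the compact torus (where its spectrum is discrete and the span of $\nabla_{w}\nabla_{w}$, $\nabla_{\overline{w}}\nabla_{\overline{w}}$ is $\ad_{\Delta}$-invariant), the operator $\exp(r\Delta)$ is unitary and the formal manipulations extend from the smooth core by density.
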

		The result can be proven via a straightforward adaptation of the argument presented in~\cite{AM19}.
		
	\paragraph{Motivation: the moduli spaces of flat connections on a genus-one surface.}
		\label{subsec:genus1}
				
			The definitions introduced in this section are motivated by the $\SL(2,\Cx)$--Chern-Simons theory on a smooth oriented surface $\Sigma$ of genus one.
			If $G$ denotes either $\SU(2)$ or $\SL(2,\Cx)$, the moduli space of flat $G$-connections on $\Sigma$ can be realised as a product of two copies of a maximal torus in $G$, modulo the action of the Weyl group $W \simeq \Z/2\Z$.
			The moduli spaces, which we denote as $\ModSp$ and $\ModSp_{\Cx}$, can then be described as $\torus/W$ and $\torus_{\Cx}/W$ respectively, where $W$ acts on each space by simultaneously inverting both entries.
			One can use the coordinates above on the moduli spaces, on which are defined the respective Atiyah-Bott forms $\omega^{\text{AB}}$ and $\omega_{\Cx}^{\text{AB}}$, which pull back to $2 \omega$ and $2 \omega_{\Cx}$.
			For every positive integer $k$ and real number $s$, Chern-Simons theory defines pre-quantum line bundles $\Lqnt^{k}$ and $\Lqnt^{(k+is)}$ for $k \omega^{\text{AB}}$ and $\omega_{k+is}^{\text{AB}} = \Re((k+is) \omega_{\Cx}^{\text{AB}})$.
			It follows from the definitions that these lift to $\Lqnt^{2k}$ and $\Lqnt^{(2k + 2is)}$ on $\torus$ and $\torus_{\Cx}$.
			
			If $(x,y)$ are $1$-periodic coordinates on the surface, every $\sigma \in \Teich$ defines a Riemann surface structure on $\Sigma$ with holomorphic coordinate $z = x + \inv{\sigma} y$ (for the \emph{reversed} orientation).
			This correspondence gives a biholomorphism between $\Teich$ and the Teichmüller space of $\Sigma$.
			The Hodge $*$-operator, which defines the Kähler structure on $\ModSp$ for the given Riemann surface structure, is represented in these coordinates by the matrix $J$ of~\eqref{eq:complex_tensor}.
			Vectors on the moduli spaces are identified with Lie-algebra valued forms on $\Sigma$: if $T$ is a generator of a Cartan sub-algebra of $\mathfrak{su}(2)$, to $\dbydp{u}$ and $\dbydp{v}$ correspond $T \dif x$ and $T \dif y$.
			
			In order to run geometric quantisation, Witten defines a polarisation on $\ModSp_{\Cx}$ spanned by the forms of type $(1,0)$; since $T \dif z$ represents $\dbydp{\overline{w}}$ up to rescaling, this lifts to $P$ on $\torus_{\Cx}$.
			Therefore, the quantum Hilbert space thus obtained for the $\SL(2,\Cx)$--Chern-Simons theory at the level $k+is$ is contained in $\Hilb_{\sigma}^{(t)}$ for $t = 2(k+is)$, as the sub-space of $W$-invariant sections.
			As $\sigma$ varies, these spaces form a sub-bundle of $\Hilb^{(t)}$, identified with $\Teich \times \Lint^{2} ( \ModSp , \Lqnt^{k})$, which is preserved by $\HWC$.
			The restriction is the connection introduced by Witten in~\cite{Wit89}.
		
	\section{Operators from geometric quantisation on \texorpdfstring{$\torus_{\Cx}$}{T2}}
	\label{sec:geom_quant}
	\subsection{The quantum operators on \texorpdfstring{$\Hilb^{(t)}_{\sigma}$}{Hsigma}}
	
		We now fix $t = N + iS$ and study the level-$t$ pre-quantum operators associated to the logarithmic coordinates $U$ and $V$ on $\torus_{\Cx}$.
		Strictly speaking, these functions are only well defined up to picking a branch, so we should start by specifying one.
		For instance, we may choose the coordinates on $\torus$ so that $0 \leq u, v < 1$ and impose that $U$ and $V$ extend them continuously away from the $P_{\sigma}$-leaves through $\set{u = 0}$ and $\set{v = 0}$, respectively.
		It will be clear later that the resulting operators are essentially independent of the choice of a specific branch (see Remark~\ref{rem:branches}).
		We shall also talk freely of the differentials and Hamiltonian vector fields of $U$ and $V$ regardless of their discontinuity, since these objects extend unambiguously to their singular locus.
		
		\begin{Theorem}
			\label{thm:teichUVhat}
			For every $\sigma \in \Teich$, the pre-quantum operators of $U$ and $V$ are compatible with the polarisation $P_{\sigma}$ and therefore descend to the \emph{quantum} Hilbert space.
			Their action on smooth sections of $\Lqnt^{N}$ over $\torus$ is given by
			\begin{equation}
				\teichUhat \coloneqq u - \frac{i \sigma}{\pi t} \nabla_{w} \, ,
					\qquad
				\teichVhat \coloneqq v - \frac{i}{\pi t} \nabla_{w} \, .
			\end{equation}
		\end{Theorem}
		
		\begin{proof}
			Recall that the pre-quantum operator of a function $f$ on $\torus_{\Cx}$ is defined on sections of $\Lqnt^{(t)}$ as
			\begin{equation}
				\widehat{f} \coloneqq f - i \nabla_{H_{f}} \, ,
			\end{equation}
			where $H_{f}$, the Hamiltonian vector field of $f$ relative to $\omega_{t}$, is determined by
			\begin{equation}
				\label{eq:condition}
				Y[f] = \omega_{t} (Y, H_{f}) \quad \text{for every $Y \in \Tan \torus_{\Cx}$.}
			\end{equation}
			It is well known that $\widehat{f}$ preserves the space of polarised sections if and only if the Lie derivative by $H_{f}$ preserves the space of vector fields tangent to $P$.
			This is clearly the case for $U$ and $V$, given that both the symplectic form and the generators of $P$ have constant coefficients.
			
			For the last part of our assertion it is enough to show that $\frac{\sigma}{\pi t} \dbydp{w}$ and $\frac{1}{\pi t} \dbydp{w}$ differ from $H_{U}$ and $H_{V}$ by elements of $P$.
			By direct calculation we see
			\begin{equation}
				\begin{gathered}
					\omega_{t} \biggl( \overline{X}, \dbydp{w} \biggr) = \frac{ \pi t}{\sigma - \overline{\sigma}} = \frac{\pi t}{\sigma} \overline{X}[U] = \pi t \overline{X}[V] \, ,
					\\
					\omega_{t} \biggl( I\overline{X}, \dbydp{w} \biggr) = \frac{i\pi t}{\sigma - \overline{\sigma}} = \frac{\pi t}{\sigma} (I\overline{X})[U] = \pi t (I\overline{X})[V] \, .
				\end{gathered}
			\end{equation}
			Since $\overline{X}$ and $I\overline{X}$ span $P$, it follows from~\eqref{eq:condition} that $H_{U} - \frac{\sigma}{\pi t} \dbydp{w}$ and $H_{V} - \frac{1}{\pi t} \dbydp{w}$ are $\omega_{t}$-orthogonal to $P$, and our conclusion follows.
		\end{proof}
		
		We now wish to define quantum operators for the exponential coordinates $m$ and $\ell$, to which end we rely on the spectral theorem for normal operators, see e.g.~\cite{Con94}.
		In summary, a densely defined operator $E$ on a separable Hilbert space is called normal if it is closed, shares the same domain as its adjoint $E^{\dagger}$, and the two commute.
		The spectral theorem states that any such operator is unitarily equivalent to the multiplication by a function $\phi$ on the $\Lint^{2}$ space of some measure space.
		The exponential $\exp(E)$ is then defined as the operator corresponding to $e^{\phi}$, also closed and densely defined.
		
		\begin{Remark}
			\label{rem:spectral_thm}
			In the following we shall use two consequences of the spectral theorem for a normal operator $E$ on a separable Hilbert space.
			First, if $\psi$ is a vector on which the exponential series of $E$ converges, then the sum equals $\exp(E) \psi$.
			Second, there exists a nested family of subspaces $H_{C}$, $C \in \R_{>0}$ whose union is dense and such that, for each $C$, both $E$ and $E^{\dagger}$ preserve $H_{C}$ and are bounded by $C$ on that subspace.
			In particular the exponential series of $E$ is strongly convergent on every $H_{C}$, and therefore $\exp(E)$ may be expressed as a series on a dense subspace.
		\end{Remark}
		
		\begin{Theorem}
			\label{thm:normalops}
			The quantum operators $\teichUhat$ and $\teichVhat$, acting on $\Hilb^{(t)}_{\sigma}$, are normal.
		\end{Theorem}
		
		\begin{proof}
			On the one hand, $u$ and $v$ are bounded and self-adjoint, so the conditions on the domains of $\teichUhat$ and $\teichVhat$ break down to $\nabla_{w}$.
			The latter operator is well defined on the subspace $W^{1,2} (\torus,\Lqnt^{N}) \subset \Lint^{2} (\torus, \Lqnt^{N})$ consisting of all sections whose (distributional) covariant derivatives along $\dbydp{u}$ and $\dbydp{v}$ are themselves $\Lint^{2}$-sections.
			A standard exercise shows that, with this domain, $\nabla_{w}$ is a closed operator with adjoint $-\nabla_{\overline{w}}$ defined on the same domain.
			
			We check the commutation relations by direct computation, namely
			\begin{equation}
				\begin{split}
					 \Bigl[ \teichUhat , \teichUhat^{\dagger} \Bigr] ={}& \Bigl[ u - \frac{i \sigma}{\pi t} \nabla_{w} , u - \frac{i \overline{\sigma}}{\pi \overline{t}} \nabla_{\overline{w}} \Bigr] = \\
						={}& \frac{i \sigma}{\pi t} \cdot \frac{\overline{\sigma}}{\sigma - \overline{\sigma}} + \frac{i \overline{\sigma}}{\pi \overline{t}} \cdot \frac{\sigma}{\sigma - \overline{\sigma}}
							- \frac{\sigma \overline{\sigma}}{ \pi^{2} t \overline{t}} \cdot \frac{2 N \pi i}{\sigma - \overline{\sigma}} = 0
				\end{split}
			\end{equation}
			and
			\begin{equation}
				\begin{split}
					 \Bigl[ \teichVhat , \teichVhat^{\dagger} \Bigr] ={}& \Bigl[ v - \frac{i}{\pi t} \nabla_{w} , v - \frac{i}{\pi \overline{t}} \nabla_{\overline{w}} \Bigr] = \\
						={}& \frac{i}{\pi t} \cdot \frac{\overline{\sigma}}{\sigma - \overline{\sigma}} + \frac{i}{\pi \overline{t}} \cdot \frac{\sigma}{\sigma - \overline{\sigma}}
							- \frac{1}{ \pi^{2} t \overline{t}} \cdot \frac{2 N \pi i}{\sigma - \overline{\sigma}} = 0 \, .
						\qedhere
				\end{split}
			\end{equation}
		\end{proof}

		Since for every $\lambda \in \Cx$ and every normal operator $N$ on a Hilbert space $\lambda N$ is also normal, the lemma ensures then that the following is well posed.
		
		\begin{Definition}
			We define quantum operators associated to $m$ and $\ell$ on $\Hilb^{(t)}_{\sigma}$ as
			\begin{equation}
				\teichmhat = \exp \Bigl( 2 \pi i \teichUhat \Bigr) \, ,
					\qquad
				\teichellhat = \exp \Bigl( 2 \pi i \teichVhat \Bigr) \, .
			\end{equation}
		\end{Definition}
		
		\begin{Remark}
			\label{rem:branches}
			A different branch of $U$, say continuous on an open dense, would differ from the first by a function $c$ valued in $\Z$, and thus locally constant.
			This change is of no consequence on $H_{U}$, and therefore the only effect on $\teichUhat$ is to add $c$.
			However, the multiplication by a locally constant function commutes with all differential operators, and therefore by Baker-Campbell-Hausdorff we have
			\begin{equation}
				\exp \Bigl( 2 \pi i \bigl(\teichUhat + c \bigr) \Bigr) = e^{2 \pi i c} \exp \Bigl( 2 \pi i \teichUhat \Bigr) = \exp \Bigl( 2 \pi i \teichUhat \Bigr) \, .
			\end{equation}
			In other words, $\teichmhat$ is unaffected by choosing a different branch, and the situation is analogous for $V$ and $\teichellhat$.
		\end{Remark}
		
	\subsection[Trivialisation of the Hitchin-Witten connection]{Trivialisation of the Hitchin-Witten connection and \texorpdfstring{$\sigma$}{sigma}-independent operators}
		\label{subsec:trivialisation}
		
		Our next goal is to show that, after trivialising $\HWC$ using Proposition~\ref{prop:HWCtrivialisation}, the operators from the previous section become $\sigma$-independent.
		
		\begin{Definition}
			We define the $\sigma$-independent quantum operators of $U$, $V$, $m$, and $\ell$ as
			\begin{equation}
				\begin{array}{cc}
					\Uhat \coloneqq \exp \bigl(r \Delta \bigr) \teichUhat \exp \bigl(-r\Delta \bigr) \, ,
						\qquad &
					\Vhat \coloneqq \exp \bigl(r \Delta \bigr) \teichVhat \exp \bigl(-r\Delta \bigr) \, , \\[.4em]
					\mhat \coloneqq \exp \bigl(r \Delta \bigr) \teichmhat \exp \bigl(-r\Delta \bigr) \, ,
						\qquad &
					\ellhat \coloneqq \exp \bigl(r \Delta \bigr) \teichellhat  \exp \bigl(-r\Delta \bigr) \, .
				\end{array}
			\end{equation}
		\end{Definition}
		
		The phrasing of the definition above is justified by the following result.
		
		\begin{Theorem}
			\label{thm:nosigmaops}
			The $\sigma$-independent operators are
			\begin{equation}
				\begin{gathered}
					\Uhat = u - i \frac{e^{2rN} - 1}{2 N \pi} \nabla_{v}
						\qquad
					\Vhat = v + i \frac{e^{2rN} - 1}{2 N \pi} \nabla_{u} \, , \\[.4em]
					\mhat = \exp \bigl( 2 \pi i \Uhat \bigr) = e^{2 \pi i u} \exp \biggl( \frac{e^{2rN} - 1}{N} \nabla_{v} \biggr) \, , \\[.4em]
					\ellhat = \exp \bigl( 2 \pi i \Vhat \bigr) = e^{2 \pi i v} \exp \biggl( - \frac{e^{2rN} - 1}{N} \nabla_{u} \biggr) \, ,
				\end{gathered}
			\end{equation}
			and therefore are indeed independent of $\sigma$.
		\end{Theorem}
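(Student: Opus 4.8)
The plan is to evaluate the conjugations defining $\Uhat$, $\Vhat$, $\mhat$, $\ellhat$ in Definition (just above) by means of the Hadamard lemma, writing $\exp(r\Delta)\, X \exp(-r\Delta) = e^{r \ad_{\Delta}}(X) = \sum_{n \geq 0} \frac{r^{n}}{n!} \ad_{\Delta}^{n}(X)$, and exploiting that $\ad_{\Delta}$ acts in an essentially triangular fashion on the finite-dimensional space spanned by $1$, $u$, $v$, $\nabla_{w}$ and $\nabla_{\overline{w}}$. The first step is to record the action of $\ad_{\Delta}$ on the covariant derivatives: from $\Delta = -i \frac{\sigma - \overline{\sigma}}{2\pi}\bigl(\nabla_{w}\nabla_{\overline{w}} + \nabla_{\overline{w}}\nabla_{w}\bigr)$ together with the commutator~\eqref{eq:omegaholo}, a direct calculation gives $[\Delta, \nabla_{w}] = -2N \nabla_{w}$ and $[\Delta, \nabla_{\overline{w}}] = 2N \nabla_{\overline{w}}$. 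Thus $\nabla_{w}$ and $\nabla_{\overline{w}}$ are $\ad_{\Delta}$-eigenvectors with eigenvalues $\mp 2N$, whence $e^{r \ad_{\Delta}}(\nabla_{w}) = e^{-2Nr}\nabla_{w}$ and $e^{r \ad_{\Delta}}(\nabla_{\overline{w}}) = e^{2Nr}\nabla_{\overline{w}}$.

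Next I would compute the action on the coordinate multiplications. Using $[\nabla_{w}, u] = \partial_{w} u$ and the expressions~\eqref{eq:holo_fields} for $\partial_{w}, \partial_{\overline{w}}$, one finds $[\Delta, u] = -\frac{i}{\pi}\bigl(\sigma \nabla_{w} - \overline{\sigma}\nabla_{\overline{w}}\bigr)$ and $[\Delta, v] = -\frac{i}{\pi}\bigl(\nabla_{w} - \nabla_{\overline{w}}\bigr)$. Since these already lie in the $\ad_{\Delta}$-eigenspace, the series for $e^{r \ad_{\Delta}}(u)$ collapses to two geometric sums, giving
\begin{equation*}
	\exp(r\Delta)\, u\, \exp(-r\Delta) = u + \frac{i \sigma}{2N\pi}\bigl(e^{-2Nr} - 1\bigr)\nabla_{w} + \frac{i \overline{\sigma}}{2N\pi}\bigl(e^{2Nr} - 1\bigr)\nabla_{\overline{w}} \, ,
\end{equation*}
and an analogous expression for $v$. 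Adding the conjugate of the first-order part of $\teichUhat = u - \frac{i\sigma}{\pi t}\nabla_{w}$, namely $-\frac{i\sigma}{\pi t}e^{-2Nr}\nabla_{w}$, and inverting~\eqref{eq:holo_fields} to write $\nabla_{v} = -\bigl(\sigma \nabla_{w} + \overline{\sigma}\nabla_{\overline{w}}\bigr)$ and $\nabla_{u} = \nabla_{w} + \nabla_{\overline{w}}$, I would then collect the coefficients of $\nabla_{w}$ and $\nabla_{\overline{w}}$ separately.

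The coefficient of $\nabla_{\overline{w}}$ comes out at once as $\frac{i \overline{\sigma}}{2N\pi}(e^{2Nr}-1)$, matching the $\overline{\sigma}$-part of the desired $-\frac{i(e^{2rN}-1)}{2N\pi}\nabla_{v}$. The \emph{main obstacle} is the coefficient of $\nabla_{w}$: it is a priori both $\sigma$- and $t$-dependent, and collapsing it to $\frac{i\sigma}{2N\pi}(e^{2Nr}-1)$ requires the identity
\begin{equation*}
	\frac{e^{-2Nr}-1}{2N} - \frac{e^{-2Nr}}{t} = \frac{e^{2Nr}-1}{2N} \, ,
\end{equation*}
which rearranges to $t\, e^{4Nr} = -\overline{t}$. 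This is exactly where the defining condition $e^{4rN} = -\overline{t}/t$ of Proposition~\ref{prop:HWCtrivialisation} enters, combined with $t + \overline{t} = 2N$ (i.e. $\Re t = N$), so that $-\overline{t} = t - 2N$. Conceptually the cancellation is forced: $\teichUhat$ is $\HWC$-parallel by Theorem~\ref{thm:parallel_operators}, and conjugating by the trivialisation of Proposition~\ref{prop:HWCtrivialisation} turns a $\HWC$-parallel operator into a $\nablatr$-parallel, hence $\sigma$-independent, one, so the $\sigma$-dependent $\nabla_{w}$ contributions \emph{must} disappear. Carrying this through yields $\Uhat = u - \frac{i(e^{2rN}-1)}{2N\pi}\nabla_{v}$ and, by the identical computation for $v$, $\Vhat = v + \frac{i(e^{2rN}-1)}{2N\pi}\nabla_{u}$.

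Finally, for the exponentiated operators I would use that conjugation commutes with the (strongly convergent, by normality of $\teichUhat,\teichVhat$) exponential series, so that $\mhat = \exp(r\Delta)\exp(2\pi i \teichUhat)\exp(-r\Delta) = \exp(2\pi i \Uhat)$ and likewise $\ellhat = \exp(2\pi i \Vhat)$. Substituting gives $2\pi i \Uhat = 2\pi i u + \frac{e^{2rN}-1}{N}\nabla_{v}$; since $u$ and $\nabla_{v}$ commute (as $\partial_{v} u = 0$), the exponential factors without any Baker--Campbell--Hausdorff correction, producing $\mhat = e^{2\pi i u}\exp\bigl(\frac{e^{2rN}-1}{N}\nabla_{v}\bigr)$, and the same argument with $[v,\nabla_{u}]=0$ gives $\ellhat = e^{2\pi i v}\exp\bigl(-\frac{e^{2rN}-1}{N}\nabla_{u}\bigr)$.
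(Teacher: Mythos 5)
Your proposal is correct and follows essentially the same route as the paper's proof: the Hadamard formula $\Ad_{\exp(r\Delta)} = \exp(r\ad_{\Delta})$, the eigenvalue relations $[\Delta,\nabla_{w}] = -2N\nabla_{w}$ and $[\Delta,\nabla_{\overline{w}}] = 2N\nabla_{\overline{w}}$, the resulting geometric sums for the conjugates of $u$ and $v$, the defining relation $e^{4rN} = -\overline{t}/t$ together with $t+\overline{t}=2N$ to collapse the $\nabla_{w}$-coefficient, and the commutativity of $u$ with $\nabla_{v}$ (resp.\ $v$ with $\nabla_{u}$) to split the exponentials. The only addition is your a priori remark that Theorem~\ref{thm:parallel_operators} forces the $\sigma$-dependence to cancel, which is a sound consistency check but not needed for the computation.
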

		
		\begin{proof}
			We proceed to study $\Uhat$ and $\Vhat$ by expanding $\exp(\pm r\Delta)$ as power series.
			Throughout the proof we will use that
			\begin{equation}
				\label{eq:ad}
				\begin{aligned}
					[\Delta, u] = - \frac{i}{\pi} \bigl( \sigma \nabla_{w} - \overline{\sigma} \nabla_{\overline{w}} \bigr) \, , &\qquad
					[\Delta, v] = - \frac{i}{\pi} \bigl( \nabla_{w} - \nabla_{\overline{w}} \bigr) \, , \\
					[\Delta, \nabla_{w}] = - 2N \nabla_{w} \, , &\qquad
					[\Delta, \nabla_{\overline{w}}] = 2N \nabla_{\overline{w}} \, ,
				\end{aligned}
			\end{equation}
			from which it follows by induction that
			\begin{equation}
				\begin{gathered}
					\Delta^{n} \teichUhat = \teichUhat \Delta^{n} - \frac{i}{\pi t}
						\sum_{k = 1}^{n} \binom{n}{k} (2N)^{k-1} \Bigl( (-1)^{k} \sigma \overline{t} \nabla_{w} - \overline{\sigma} t \nabla_{\overline{w}} \Bigr) \Delta^{n-k} \, , \\
					\Delta^{n} \teichVhat = \teichVhat \Delta^{n} - \frac{i}{\pi t}
						\sum_{k = 1}^{n} \binom{n}{k} (2N)^{k-1} \Bigl( (-1)^{k} \overline{t} \nabla_{w} - t \nabla_{\overline{w}} \Bigr) \Delta^{n-k} \, .
				\end{gathered}
			\end{equation}
			
			To prove the theorem, suppose that subspaces $H_{C} \subset \Lint^{2} (\torus, \Lqnt^{N})$, $C \in \R_{>0}$ are given as in Remark~\ref{rem:spectral_thm}, for $E = \Delta$.
			We will show below that the series
			\begin{equation}
				\label{eq:totconv}
				S \psi \coloneqq \sum_{n,m \in \Z_{\geq 0}} \frac{(-1)^{m} r^{n+m}}{n!m!} \Delta^{n} \teichUhat \Delta^{m} \psi \, .
			\end{equation}
			is totally convergent whenever $\psi$ lies in $H_{C}$.
			Assuming this as a given for now, we see on the one hand, summing over $n$ first and then over $m$, that
			\begin{equation}
				S\psi = \sum_{m = 0}^{\infty} \frac{(-r)^{m}}{m!} \exp(r\Delta) \teichUhat \Delta^{m} \psi 
				= \exp(r\Delta) \teichUhat \biggl( \sum_{m = 0}^{\infty} \frac{(-r)^{m}}{m!} \Delta^{m} \psi \biggr)
				= \Uhat \psi
			\end{equation}
			where we used that $\exp(r\Delta)$ is continuous and $\teichUhat$ closed.
			On the other hand, a different arrangement of the terms yields
			\begin{equation}
				S \psi = \sum_{n = 0}^{\infty} \sum_{k = 0}^{n} \frac{(-1)^{n-k} r^{n}}{k!(n-k)!} \Delta^{k} \teichUhat \Delta^{n-k} \psi
				= \sum_{n = 0}^{\infty} \frac{r^{n}}{n!} \ad_{\Delta}^{n} (\teichUhat) \psi \, ,
			\end{equation}
			which using~\eqref{eq:ad} evaluates to
			\begin{equation}
				\begin{split}
					S \psi
					={}& \Bigl(u - \frac{i \sigma}{\pi t} \nabla_{w} \Bigr) \psi
						-\frac{i}{2Nt\pi} \sum_{n = 1}^{\infty} \frac{(2Nr)^{n}}{n!} \bigl( (-1)^{n} \sigma \overline{t} \nabla_{w} - \overline{\sigma} t \nabla_{\overline{w}} \bigr) \psi =\\
					={}& u \psi
						+ \frac{i}{\pi} \biggl(\Bigl( - \frac{\overline{t}}{t} \frac{e^{-2Nr} -1}{2N} - \frac{1}{t} \Bigl) \sigma \nabla_{w}
						+ \frac{e^{2Nr}-1}{2N} \overline{\sigma} \nabla_{\overline{w}} \biggr) \psi = \\
					={}& u \psi + \frac{i (e^{2Nr}-1)}{2N\pi} \bigl(\sigma \nabla_{w} + \overline{\sigma} \nabla_{\overline{w}}\bigr) \psi
					= \Bigl( u - \frac{i (e^{2Nr} - 1)}{2N\pi} \nabla_{v} \Bigr) \psi \, .
				\end{split}
			\end{equation}
			This establishes the desired equality for $\Uhat$ on $H_{C}$ for every $C$, and thus on a dense subspace.
			Since the operators are closed, the equality then extends to the respective domains.
			
			What remains to be seen is the total convergence of~\eqref{eq:totconv}.
			It is well known that $\Delta$ is a self-adjoint operator with essential domain consisting of all $\Lint^{2}$ sections whose weak Laplacian is itself an $\Lint^{2}$-section.
			Therefore, if $\psi \in \dom(\Delta)$ is approximated by a sequence of smooth sections $\psi_{n}$, then $\Delta \psi_{n}$ converges in $\Lint^{2}$ (to $\Delta \psi$).
			Given any $\varepsilon > 0$, using~\eqref{eq:laplacian} we find
			\begin{equation}
				\begin{split}
					\norm{\nabla_{\overline{w}}(\psi_{n}-\psi_{m})}^{2} ={}& \abs{\braket{\nabla_{w} \nabla_{\overline{w}} (\psi_{n} - \psi_{m}), \psi_{n}-\psi_{m}}} \\
					\leq{} & \frac{\pi}{\abs{\sigma-\overline{\sigma}}} \norm{(\Delta+N) (\psi_{n} - \psi_{m})} \norm{\psi_{n} - \psi_{m}} < \varepsilon
				\end{split}
			\end{equation}
			for $n$ and $m$ sufficiently large.
			Therefore, $\nabla_{\overline{w}} \psi_{n}$ is a Cauchy sequence in $\Lint^{2}$ and therefore $\psi \in \dom(\nabla_{\overline{w}}) = \dom(\nabla_{w})$.
			If, in particular, $\psi \in H_{C}$, then a similar manipulation yields
			\begin{equation}
				\norm{\nabla_{\overline{w}} \psi}^{2} \leq \frac{C+N}{\abs{\sigma - \overline{\sigma}}} \norm{\psi}^{2} \eqqcolon R^{2} \norm{\psi}^{2} \, ,
			\end{equation}
			and similarly for $\nabla_{w} \psi$.
			Since $\Delta$ preserves $H_{C}$, the same will hold with $\Delta^{n} \psi$ in place of $\psi$ for any $n$.
			Moreover, using the expressions for $\teichUhat$ and $\teichVhat$ in Theorem~\ref{thm:teichUVhat} similar inequalities will hold for these operators as well.
			
			For every $n,m \in \Z_{\geq 0}$ we then have that
			\begin{equation}
				\begin{split}
					\norm{\Delta^{n} \teichUhat \Delta^{m} \psi}
					\leq{} & \bigl( 1 + \abs{\sigma} R \bigr) C^{n+m} \norm{\psi}
						+ \sum_{k = 1}^{n} \binom{n}{k} (2N)^{k-1} \abs{\sigma} R C^{n-k+m} \norm{\psi} \\
					\leq{} & C^{m} \biggl(C^{n} + \abs{\sigma} R \sum_{k = 0}^{n} \binom{n}{k} (2N)^{k} C^{n-k} \biggr) \norm{\psi} \\
					={}& C^{m} \biggl(C^{n} + \abs{\sigma} R (C+2N)^{n} \biggr) \norm{\psi} \, .
				\end{split}
			\end{equation}
			This is enough to show total convergence of~\eqref{eq:totconv} as claimed, and finally establish our claim on $\Uhat$.
			
			The process for $\Vhat$ is completely analogous.
			The relations for $\mhat$ and $\ellhat$ follow since exponentiation is stable under conjugation by unitary maps, the splitting following by Baker-Campbell-Hausdorff.
		\end{proof}
		
	\subsection{The Weil-Gel'fand-Zak transform}
		
		\begin{Lemma}[\cite{AK14b},\cite{AM16}]
			The map $W^{(N)} \!\colon\! \Schw (\AN , \Cx) \to \smth (\torus , \mathcal{L}^N)$ defined by
			\begin{equation}
				f(x,n)		\mapsto	s(u,v) = e^{i\pi N uv} \sum_{m \in \Z} f \Big( \sqrt{N} u + \frac{m}{\sqrt{N}} , -m \Big) e^{2 \pi i mv}
			\end{equation}
			is an isomorphism.
			Moreover, it intertwines the $\Lint^{2}$-pairings on the two spaces and thus extends to an isometry of their completions.
		\end{Lemma}
		
		The above map is called the Weil-Gel'fand-Zak transform, and it transforms the quantum operators on $\Hilb^{(t)}$ according to the following statement.
				
		\begin{Lemma}
			\label{lemma:conjugation}
			For every $f \in \Schw (\AN, \Cx)$, one has
			\begin{equation}
				\begin{gathered}
					\nabla_{u} W^{(N)} \big( f(\xn) \big) = W^{(N)} \bigl(\sqrt{N} f' (\xn) \bigr) \, , \\
					\nabla_{v} W^{(N)} \big( f(\xn) \big) = W^{(N)} \Big(2\pi i \sqrt{N} x f(\xn) \Big) \, , \\
					e^{2 \pi i u} W^{(N)} \big( f(\xn) \big) = W^{(N)} \Big( e^{2 \pi i \frac{x}{\sqrt{N}}} e^{2 \pi i \frac{n}{N}} f(\xn) \Big) \, , \\
					e^{2 \pi i v} W^{(N)} \big( f(\xn) \big) = W^{(N)}  \Bigl( f \Big( x - \rec{\sqrt{N}} , n + 1 \Big) \Bigr) \, .
				\end{gathered}
			\end{equation}
		\end{Lemma}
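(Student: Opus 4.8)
The plan is to establish all four identities by direct substitution into the series defining $W^{(N)}$, since each relation reduces to a bookkeeping computation once the relevant operators are written out explicitly. First I would record the explicit form of the covariant derivatives coming from the connection form $\theta^{N}_{(u,v)} = N\pi(v\,\dif u - u\,\dif v)$, namely $\nabla_{u} = \partial_{u} - iN\pi v$ and $\nabla_{v} = \partial_{v} + iN\pi u$, and apply them to the section $s(u,v) = e^{i\pi N uv}\sum_{m} f(\sqrt{N}u + m/\sqrt{N},\, -m)\, e^{2\pi i mv}$ guaranteed by the previous lemma to lie in $\smth(\torus,\Lqnt^{N})$.

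For the two derivative relations the Gaussian prefactor $e^{i\pi N uv}$ is the crucial feature. Differentiating it in $u$ produces precisely the term $iN\pi v\, s$, which is cancelled by the $-iN\pi v$ contribution of $\nabla_{u}$; what remains is the term in which $\partial_{u}$ hits the argument $\sqrt{N}u + m/\sqrt{N}$ of $f$, and the chain rule supplies the factor $\sqrt{N}$, giving $W^{(N)}(\sqrt{N}f')$. Symmetrically, applying $\nabla_{v}$ leaves two contributions: the factor $2\pi i N u\, s$ (the $+iN\pi u$ term of $\nabla_{v}$ together with the $v$-derivative of $e^{i\pi N uv}$) and the factor $2\pi i m$ from differentiating $e^{2\pi i mv}$; recombining these as $2\pi i(Nu + m) = 2\pi i \sqrt{N}\,(\sqrt{N}u + m/\sqrt{N})$ identifies the result as $W^{(N)}(2\pi i\sqrt{N}x f)$, using that $\sqrt{N}u + m/\sqrt{N}$ is exactly the value fed into the $x$-slot of $f$.

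For the two multiplication relations I would instead push the prefactor into the sum and reindex. In the third identity, evaluating $e^{2\pi i x/\sqrt{N}}e^{2\pi i n/N}f$ at the sampling point $(x,n) = (\sqrt{N}u + m/\sqrt{N},\, -m)$ yields the phase $e^{2\pi i(u + m/N)}e^{-2\pi i m/N} = e^{2\pi i u}$, where the two $m$-dependent exponentials cancel precisely because $n = -m$ on the support of the sum; hence only the global factor $e^{2\pi i u}$ survives. In the fourth identity, $f(x - 1/\sqrt{N},\, n+1)$ at the same sampling point becomes $f(\sqrt{N}u + (m-1)/\sqrt{N},\, -(m-1))$, and the substitution $m \mapsto m+1$ returns the original summand at the cost of an extra factor $e^{2\pi i v}$ coming from $e^{2\pi i(m+1)v}$; the lattice shift is thus reproduced exactly by multiplication by $e^{2\pi i v}$.

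The only point requiring care, rather than genuine difficulty, is the justification of term-by-term differentiation and of the reindexing of the infinite sum. Since $f \in \Schw(\AN,\Cx)$, the series together with all of its formal derivatives converges absolutely and locally uniformly, so differentiation under the summation sign and the shift $m \mapsto m+1$ are both legitimate; I would state this once and then carry out the four computations as above. The relations then hold on the dense subspace $\Schw(\AN,\Cx)$ and extend to the $\Lint^{2}$-completions by continuity, exactly as in the preceding lemma.
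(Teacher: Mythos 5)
Your proposal is correct and follows essentially the same route as the paper: write $\nabla_{u} = \partial_{u} - iN\pi v$ and $\nabla_{v} = \partial_{v} + iN\pi u$ from the connection form, let the prefactor $e^{i\pi N uv}$ absorb the connection terms, and handle the two multiplication operators by evaluating the phases at the sampling points $(\sqrt{N}u + m/\sqrt{N},\,-m)$ and reindexing the sum. The paper's proof is the same computation, with the same one-line appeal to the Schwartz condition to justify interchanging differentiation with the sum.
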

		
		\begin{proof}
			We proceed by direct computation.
			Fast decay of Schwartz-class functions and their derivatives justifies term-by-term differentiation, which yields
			\begin{equation}
				\begin{split}
					\nabla_{u} W^{(N)} \big( f(\xn) \big) ={}& \dbydp{u} W^{(N)} \big( f(\xn) \big) - i \pi N v W^{(N)} \big( f(\xn) \big) = \\
					={}& e^{i\pi N uv} \sum_{m \in \Z} \dbydp{u} f \Big( \sqrt{N} u + \frac{m}{\sqrt{N}} , -m \Big) e^{2 \pi i mv} + \\
					& + i \pi N v W^{(N)} \big( f \big) - i \pi N v W^{(N)} \big( f \big) = \\
					={}& e^{i\pi N uv} \sum_{m \in \Z} \sqrt{N} f' \Big( \sqrt{N} u + \frac{m}{\sqrt{N}} , -m \Big) e^{2 \pi i mv} \, .
				\end{split}
			\end{equation}
			Similarly, differentiation in $v$ yields
			\begin{equation}
				\begin{split}
					\nabla_{v} W^{(N)} \big( f (\xn) \big) ={}& \dbydp{v} W^{(N)} \big( f(\xn) \big) + N \pi i u W^{(N)} \big( f(\xn) \big) = \\
					={}& N \pi i u e^{N \pi i uv} \sum_{m \in \Z} f \Big( \sqrt{N} u + \frac{m}{\sqrt{N}} , -m \Big) e^{2 \pi i mv} + \\
					& 2 \pi i e^{N \pi i uv} \sum_{m \in \Z} m f \Big( \sqrt{N} u + \frac{m}{\sqrt{N}} , -m \Big) e^{2 \pi i mv} + \\
					& + N \pi i u W^{(N)} \big( f(x,n) \big) = \\
					= 2 \pi i \sqrt{N} & e^{N \pi i uv} \sum_{m \in \Z} \Big(\sqrt{N} u + \frac{m}{\sqrt{N}} \Big) f \Big( \sqrt{N} u + \frac{m}{\sqrt{N}} , -m \Big) e^{2 \pi i mv} \, .
				\end{split}
			\end{equation}
			By a simple manipulation we see that
			\begin{equation}
				\begin{split}
					e^{2 \pi i u} W^{(N)} \big( f(\xn) \big) ={}& e^{i\pi N uv} \sum_{m \in \Z} e^{2 \pi i u} f \Big( \sqrt{N} u + \frac{m}{\sqrt{N}} , -m \Big) e^{2 \pi i mv} = \\
					={}& e^{i\pi N uv} \sum_{m \in \Z} e^{2 \pi i ( u + \frac{m}{N})} e^{- 2 \pi i \frac{m}{N}} f \Big( \sqrt{N} u + \frac{m}{\sqrt{N}} , -m \Big) e^{2 \pi i mv} \, .
				\end{split}
			\end{equation}
			Finally, changing variable from $m$ to $m-1$ we find
			\begin{equation}
				\begin{split}
					e^{2 \pi i v} W^{(N)} \big( f(\xn) \big) ={}& e^{i\pi N uv} \sum_{m \in \Z} f \Big( \sqrt{N} u + \frac{m}{\sqrt{N}} , -m \Big) e^{2 \pi i (m+1) v} = \\
					={}& e^{N \pi i uv} \sum_{m \in \Z} f \Big( \sqrt{N} u + \frac{m-1}{\sqrt{N}} , -m + 1 \Big) e^{-2 \pi i mv} \, .
					\qedhere
				\end{split}
			\end{equation}
		\end{proof}
		
		\begin{Theorem}
			\label{thm:conjugation}
			Let $t = N+iS$ be fixed, $r$ as in Proposition~\ref{prop:HWCtrivialisation}, $\qb \coloneqq -ie^{2rN}$.
			Then the Weil-Gel'fand-Zak transform intertwines the operators $\mhat$ and $\ellhat$ on $\Lint^{2} (\torus, \Lqnt^{N})$ with $\mx$ and $\lx$ (cf. Definition~\ref{def:ANops}) on $\Lint^{2} (\AN, \Cx)$, respectively.
		\end{Theorem}
		
		\begin{proof}
			The identities of Lemma~\ref{lemma:conjugation}, being established on a dense subspace, extend to the respective essential domains in $\Lint^{2}$.
			Since $W^{(N)}$ is a unitary isomorphism, the identities also carry over to the exponentials.
			Given that $e^{2\pi i u}$ and $\nabla_{v}$ correspond to multiplication operators, checking the relation between $\mhat$ and $\mx$ reduces to
			\begin{equation}
				\mhat W^{(N)} \bigl( f(\xn) \bigr)
				= W^{(N)} \Bigl( e^{2 \pi i \frac{x}{\sqrt{N}}} e^{2 \pi i \frac{n}{N}} e^{2 \pi i \frac{i \qb-1}{\sqrt{N}} x} f (\xn) \Bigr)
				= W^{(N)} \bigl( \mx f(\xn) \bigr) \, .
			\end{equation}
		
			On the other hand, we have that
			\begin{equation}
				\exp \biggl( - \frac{i\qb - 1}{N} \nabla_{u} \biggr) W^{(N)} = W^{(N)} \exp \biggl( -\frac{i \qb - 1}{\sqrt{N}} \dbydt{x} \biggr) \, .
			\end{equation}
			Following Remark~\ref{rem:operators_domain} the exponential on the right-hand side acts, in the appropriate sense, as the shift by $- \frac{i\qb - 1}{\sqrt{N}}$ in $x$.
			We may then conclude that
			\begin{equation}
				\ellhat W^{(N)} \bigl(f(\xn)\bigr)
				= W^{(N)} \biggl( f \Bigl( x - \frac{i \qb -1}{\sqrt{N}} - \frac{1}{\sqrt{N}} , n+1 \Bigr)\biggr)
				= W^{(N)} \bigl( \lx f(\xn) \bigr) \, ,
			\end{equation}
			which was our claim.
		\end{proof}
		
	\section{The annihilator of \texorpdfstring{$\AKJ$}{AKJ}}
		\label{sec:AK}
		
		Throughout this section we will always assume that $N$ is an \emph{odd} positive integer.
		For a fixed $S \in \R$, let $t = N+iS$ and
		\begin{equation}
			\qb = - i e^{2rN} \, ,
			\qquad
			\cb = \frac{i (\qb + \qb^{-1})}{2} \, ,
			\qquad
			q^{-\frac{1}{2}} = -e^{i \pi \frac{qb^{2} + 1}{N}}
		\end{equation}
		as before.
		We then have an action of the algebra $\localg$ from~\eqref{eq:localg} on the space of meromorphic functions on $\ANC$ by
		\begin{equation}
			E \mapsto \lx \, ,
				\qquad
			Q \mapsto \mx \, .
		\end{equation}
		As before, if $f$ is a meromorphic function it makes sense to consider its annihilating left ideals $\opideal$ and $\locideal$ in $\localg$ and $\opalg$, respectively:
		\begin{equation}
			\locideal = \Set{ p \in \localg \colon p(\mx , \lx) f = 0 } \, ,
				\qquad
			\opideal = \locideal \cap \opalg \, .
		\end{equation}
		
		\begin{Definition}
			Let $K$ be an embedded knot in a closed oriented $3$-manifold $M$, $\AKJ$ as in~\cite{AK14a,AM16}.
			We call $\widehat{A}_{q,(M,K)}^{\Cx}$, or the $\widehat{A}^{\Cx}$-polynomial of $(M,K)$, the unique element of $\opideal[J^{(\qb,N)}_{M,K}]$ which, as a polynomial in $E$, has lowest degree and co-prime coefficients in $\Z[q^{\pm\frac{1}{2}}, Q]$.
		\end{Definition}
		
		We shall often drop one or more of the subscripts in $\widehat{A}_{q,(M,K)}^{\Cx}$ where no risk of ambiguity is present.
		Recalling from Section~\ref{sec:alg_setup} the notations for the $A$- and $\widehat{A}$-polynomial of a knot, we are now ready to rephrase Theorem~\ref{thm:main_intro} more precisely.
		
		\begin{Theorem}
			\label{thm:main}
			For $K \subset S^{3}$ the figure-eight knot $4_{1}$ or $5_{2}$, we have
			\begin{equation}
				\widehat{A}_{q,K}^{\Cx} (Q, E) \cdot \bigl(Q - 1\bigr) = \widehat{A}_{q,K}^{\text{nh}} (Q, E) \, .
			\end{equation}
			In the evaluation at $q = 1$ (corresponding to the limit $t \to \infty$), we have that
			\begin{equation}
				\bigl( m^{4} - 1 \bigr) \widehat{A}_{1,K}^{\Cx} ( m^{2} , \ell) = A_{K} ( m , \ell ) \, .
			\end{equation}
		\end{Theorem}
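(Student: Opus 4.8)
The plan is to work directly from the integral formulas for $J^{(\qb,N)}_{S^{3},4_{1}}$ and $J^{(\qb,N)}_{S^{3},5_{2}}$ recorded above and to produce the generator $\widehat{A}^{\Cx}_{t,K}$ by a $q$-analogue of creative telescoping, eliminating the internal integration variable $\yn$. Writing the invariant (up to its Gaussian prefactor) as $\int_{\AN} g(\xn,\yn)\,\dif\yn$, where the integrand $g$ is a product of factors $\philog(\cdot)$ and $\Braket{\cdot}$, the first step is to compute the action of all four operators $\mx,\lx,\my,\ly$ on $g$. Since $\lx$ shifts only the $\xn$-argument and $\ly$ only the $\yn$-argument, Lemma~\ref{lemma:opGaussDilog} applied factor-by-factor — with $\widehat{m}_{\xn-\yn}=\mx\my^{-1}$ and the analogous identifications for the other arguments — yields two first-order relations $\lx g = R_{1}(\mx,\my)\,g$ and $\ly g = R_{2}(\mx,\my)\,g$, with $R_{1},R_{2}$ rational in $\mx,\my,q$. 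Thus $g$ spans a rank-one module over the relevant quantum torus, whose annihilator is the left ideal generated by $\lx-R_{1}$ and $\ly-R_{2}$.

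The second step is the elimination itself. I would look for an operator $P(\mx,\lx)$ in the $\xn$-variables alone together with a certificate $C(\mx,\lx,\my,\ly)$ such that $P-(\ly-1)C$ lies in the annihilator of $g$; then $Pg=(\ly-1)(Cg)$ is a total $\ly$-difference. Under the integral this telescopes, since a shift of $\yn$ is the same as a shift of the integration contour, so $\int_{\AN}(\ly-1)(Cg)\,\dif\yn=0$ and hence $P$ annihilates $\int g\,\dif\yn$. Concretely one clears denominators in $\lx-R_{1}$ and $\ly-R_{2}$, reduces modulo $\my$, and sets $\ly=1$ — a non-commutative Gröbner/elimination computation best delegated to Singular. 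The resulting $\xn$-operator must then be conjugated by the Gaussian prefactor relating $J$ to $\chi_{K}$: multiplication by $\mx$ is unaffected, while $\lx$ acquires a non-zero scalar factor coming from the shift $x\mapsto x-i\qb/\sqrt{N}$, which merely rescales the coefficients. After normalising to integral, co-prime coefficients of minimal $\lx$-degree one reads off $\widehat{A}^{\Cx}_{t,K}$, using the reduction property $p_{1}\lx^{j}p_{2}\in\opideal[J]\Rightarrow p_{2}\in\opideal[J]$ to strip off any spurious left factor in $\mx$.

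The main obstacle is the analytic justification of the telescoping: that the contour may indeed be shifted freely so that the telescoped term integrates to zero, and that all equalities persist for the meromorphic continuation to the whole of $\ANC$ rather than only on the real locus. Here I would use the asymptotics of Lemma~\ref{lemma:dilog_asymptotics} to pin down the half-planes of convergence of the integrand, and exhibit a family of regions $R_{a,\varepsilon}\subset\ANC$ together with deformed contours $\Gamma_{a,\varepsilon}$ on which the integral converges and represents the continuation, showing that these regions cover $\ANC$ as $(a,\varepsilon)$ range over their admissible domain. One then verifies that the shift introduced by $\ly$ crosses no pole of $g$ inside these regions, so that the telescoped integral truly vanishes. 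This residue/pole bookkeeping, rather than the formal elimination, is the delicate part, and must be carried out separately for $4_{1}$ (two dilogarithm factors, expected $\lx$-degree $2$) and $5_{2}$ (three dilogarithm factors, expected $\lx$-degree $3$).

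Finally, with the explicit $\widehat{A}^{\Cx}_{t,K}$ in hand, both displayed identities are verified by direct algebra. For the first, one expands $\widehat{A}^{\Cx}_{t,K}(\mx,\lx)\cdot(\mx-1)$ using $\lx\mx=q\mx\lx$ and $q=e^{4\pi i/t}$, and compares it coefficient-by-coefficient with $\widehat{A}_{q,4_{1}}$ and $\widehat{A}_{q,5_{2}}$, whose non-homogeneous forms are recorded in~\eqref{eq:Ahat41_known} and~\eqref{eq:Ahat52}. For the second, one passes to the limit $t\to\infty$, in which $q\to1$ and $\mx,\lx$ become the commuting variables $m^{2},\ell$, and matches $(m^{4}-1)\,\widehat{A}^{\Cx}_{\infty,K}(m^{2},\ell)$ against the classical $A$-polynomials~\eqref{eq:clA41} and~\eqref{eq:clA52}, thereby recovering the original AJ specialisation. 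These concluding checks are routine but lengthy, and I would perform them with computer assistance alongside the elimination step.
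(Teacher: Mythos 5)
Your proposal follows the paper's own proof essentially step for step: the first-order relations $\lx g = R_{1}g$, $\ly g = R_{2}g$ obtained by applying Lemma~\ref{lemma:opGaussDilog} factor by factor are exactly the operators $g_{1},g_{2}$ of Section~\ref{sec:AK}; the elimination of $\my$ followed by evaluation at $\ly=1$ is the paper's computation (your certificate identity $P-(\ly-1)C\in\mathrm{Ann}(g)$ is the same datum as the paper's relation $q^{9/2}\mx^{2}\widehat{A}=qa_{1}g_{1}-a_{2}g_{2}$ read modulo $\ly-1$); the analytic justification via the regions $R_{\varepsilon,a}$, the deformed contours and their covering of $\ANC$ is the paper's; and the rescaling of $\lx$ induced by the Gaussian prefactor relating $J^{(\qb,N)}_{S^{3},K}$ to $\chi_{K}$ is handled identically, as are the concluding coefficient comparisons with \eqref{eq:Ahat41_known}, \eqref{eq:Ahat52}, \eqref{eq:clA41} and \eqref{eq:clA52}.

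There is, however, one genuine gap. The theorem is a statement about $\widehat{A}^{\Cx}_{t,K}$, which is \emph{defined} as the element of the annihilating ideal of \emph{lowest} degree in $\lx$ with integral, co-prime coefficients. Your construction produces \emph{some} element $P$ of that ideal, of degree $2$ (resp.\ $3$) in $\lx$, and your only device for passing from $P$ to the generator is the implication $p_{1}\lx^{j}p_{2}\in\mathcal{I}\Rightarrow p_{2}\in\mathcal{I}$, which strips a visible left factor in $\mx$ but cannot exclude that the true generator has strictly smaller $\lx$-degree, i.e.\ that $P=p\cdot\widehat{A}^{\Cx}_{t,K}$ with $p$ of positive degree in $\lx$. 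The paper closes this by observing that any such factorisation specialises at $q=1$ to a factorisation of $(m^{4}-1)\,A_{K}(m,\ell)$; since $A_{K}$ is irreducible by~\cite{HS04}, either $p$ or $\widehat{A}^{\Cx}_{t,K}$ would have to be a polynomial in $\mx$ alone, and the latter is excluded because it would force $\chi_{K}=0$, contradicting its known asymptotic properties. Without this, or an equivalent minimality argument, your proof establishes the displayed identities only for \emph{an} annihilating operator, not for the generator named in the statement.
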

		
		We shall dedicate the rest of the paper to the proof of this statement.
	
	\subsection{The figure-eight knot \texorpdfstring{$4_1$}{4 1}}
		\def\knot{4_{1}}
		
		The formula for $\AKJs[\knot] (\xn) = \AKJs[S^{3},\knot]^{(\qb,N)} (\xn)$ for $\xn \in \AN \subset \ANC$ may be found e.g. in~\cite{AM16}, and it reads
		\begin{equation}
			\label{eq:AKinv}
			\AKJs[\knot] (\xn) = e^{4 \pi i \frac{\cb x}{\sqrt{N}}} \int_{\AN} \frac{ \philog (\xn - \yn) {\Braket{\yn}}^{2}}{\philog(\yn) {\Braket{\xn - \yn}}^{2}} \, \dif \yn\, .
		\end{equation}
		
		We look for operators annihilating $\AKJs[\knot]$ by working on the integrand, which we shall call $\Phi = \Phi(\xn,\yn)$.
		The action of $\mx$ and $\lx$ is well defined on meromorphic functions of $(\xn, \yn)$, and so is that of $\my$ and $\ly$ acting analogously through the variable $\yn$.
		This action may be expressed as a representation of the commutative tensor product $\opalg^{\otimes 2}$, whose formal generators we shall denote $E_{1}, Q_{1}, E_{2}, Q_{2}$.
		It is then immediate to check that
		\begin{equation}
			\begin{aligned}
				\lx \Phi ={}& q \mx^{2} \my^{-2} \bigl(1 + q^{-\frac{1}{2}} \mx^{-1} \my \bigr) \Phi \, , \\
				\ly \Phi ={}& \mx^{-2} \bigl(1 + q^{\frac{1}{2}} \mx^{-1} \my \bigr)^{-1} \bigl(1 + q^{-\frac{1}{2}} \my^{-1} \bigr)^{-1} \Phi \, .
			\end{aligned}
		\end{equation}
		By a simple manipulation, this shows that the annihilator of $\Phi$ in $\localg^{\otimes 2}$ contains
		\begin{equation}
			\begin{aligned}
				g_{1} \coloneqq{}& E_{1}Q_{2}^{2} - q^{\rec{2}} Q_{1}Q_{2} - q Q_{1}^{2} \, , \\
				g_{2} \coloneqq{}& E_{2}Q_{1}Q_{2}^{2} + q^{\rec{2}} \bigl( E_{2}Q_{1}^{2} + E_{2}Q_{1} - q \bigr) Q_{2} + q E_{2}Q_{1}^{2} \, .
			\end{aligned}
		\end{equation}
		With the aid of appropriate software (we used Singular~\cite{DGPS}), one may then run elimination to find an element in this ideal that does not contain the variable $Q_{2}$, namely
		\begin{equation}
			\label{eq:Ahat41_found}
			\begin{aligned}
				P ={}& P_{q} (E_{1},Q_{1},E_{2}) = \\
				={}& q^{3}E_{2}^{2} \bigl( q E_{2}Q_{1}^{2} - 1 ) Q_{1}^{2} E_{1}^{2} \\
				& - \bigl( q^{2}E_{2}Q_{1}^{2} - 1 \bigr) \Bigl( q^{4}E_{2}^{2}Q_{1}^{4} - q^{3}E_{2}^{2}Q_{1}^{3} - q ( q^{2} + 1 ) E_{2}Q_{1}^{2} - qE_{2}Q_{1} + 1 \Bigl) E_{1} \\
				& + q E_{2} \bigl( q^{3}E_{2}Q_{1}^{2} - 1 \bigl) Q_{1}^{2} \, .
			\end{aligned}
		\end{equation}
		We shall not report here the full elimination process, which is rather long, tedious, and computationally heavy, but the reader may verify that
		\begin{equation}
			q^{\frac{9}{2}} Q_{1}^{2} P = q a_{1} g_{1} - a_{2} g_{2} \, ,
		\end{equation}
		where
		\begin{equation}
			\begin{split}
				a_{1} ={} & E_{2} Q_{1} \Bigl( \bigl( q E_{2}Q_{1}^{2} - 1\bigr) \bigl( q^{3} E_{2}Q_{1}^{2} + q E_{2}Q_{1} - 1 \bigr) E_{1} + q^{2} E_{2} \bigl( q^{3} E_{2}Q_{1}^{2} - 1 \bigr) Q_{1}^{2} \Bigr) Q_{2} \\
				+ & q^{\frac{1}{2}} \bigl( q E_{2}Q_{1}^{2} - 1 \bigr)
				\Bigl( q^{5} E_{2}^{2}Q_{1}^{4} + q^{3} E_{2}^{2}Q_{1}^{3} + q E_{2}^{2}Q_{1}^{2} - q^{2} ( q + 1) E_{2}Q_{1}^{2} - ( q + 1) E_{2}Q_{1} + 1 \Bigr) E_{1} \\
				+ & q^{\frac{5}{2}} E_{2} \bigl( E_{2}Q_{1} - q \bigr) \bigl(q^{3} E_{2}Q_{1}^{2} - 1 \bigr) Q_{1}^{2}
			\end{split}
		\end{equation}
		and
		\begin{equation}
			\begin{split}
				a_{2} ={}& \biggl( \bigl( q E_{2}Q_{1}^{2} - 1 \bigr) \Bigl( q^{3} E_{2}Q_{1}^{2} + q E_{2}Q_{1} - 1 \Big) E_{1}^{2} + q^{3} E_{2} \Bigl(q^{3} E_{2}Q_{1}^{2} - 1 \Bigr) Q_{1}^{2} E_{1} \biggr) Q_{2} \\
				- & q^{\frac{5}{2}} E_{2} \bigl( q E_{2} Q_{1}^{2} - 1 \bigr) Q_{1}^{2}E_{1}^{2} \\
				- & q^{\frac{3}{2}} \Bigl( q^{4} ( q + 1 ) E_{2}^{2}Q_{1}^{4} + q^{2} E_{2}^{2}Q_{1}^{3} - q ( q^{2} + q + 1 ) E_{2}Q_{1}^{2} - q E_{2}Q_{1} + 1 \Bigr) Q_{1} E_{1} \\
				- & q^{\frac{7}{2}} E_{2} \bigl( q^{3} E_{2}Q_{1}^{2} - 1 \bigr) Q_{1}^{3} \, .
			\end{split}
		\end{equation}
		
		In order to obtain from $P \in \opalg^{\otimes 2}$ an element of $\opideal[{\AKJs[\knot]}]$, we need to show that, in an appropriate sense, all monomials in $\mx$, $\lx$, and $\ly$ can be taken out of the integral.
		While this is clearly the case for $\mx$, convergence of the integral does depend on the value of $\xn$ and on the specific contour, and some care is needed when shifting either variable.
		With that in mind, for fixed $h < 0$ and $a \in - T - \frac{\cb}{\sqrt{N}}$ we define (cf. Figure~\ref{fig:convergence}) a region
		\begin{equation}
			R_{h, a} \coloneqq \Set{ \xi + \lambda \frac{i\qb}{\sqrt{N}} \in T + a \colon \xi \in \R \text{ and } \frac{h}{2} < \lambda < -h } \subset \Cx
		\end{equation}
		and a contour
		\begin{equation}
			\label{eq:gamma_ah}
			\gamma_{h, a} \coloneqq \partial \Biggl( \Set{y \in \Cx \colon \Im \biggl(y - h \frac{i\qb}{\sqrt{N}} \biggr) \geq 0} \setminus \biggl(T + \frac{\cb}{\sqrt{N}} + a \biggr) \Biggr) \times \Z \slash N\Z \subset \ANC \, ,
		\end{equation}
		where we recall that $T$ is as in~\eqref{eq:T}.
				
		\begin{figure}
			\centering
				\subfloat[][\label{fig:poles}
					For fixed $x$, the poles of the integrand occur inside the infinite triangles with tips at $x$ and $-\cb$.
					The shifted triangle with tip at $a$ contains all these points if it contains both $x$ and $-\cb$.]{
				\begin{tikzpicture}[auto,scale=.16,cm={1,0,0,1,(0,0)},>=latex]
					\tikzstyle{connector}=[thick]
					\def\point{8pt};
					\def\xend{15};
					\def\yupend{11};
					\def\yend{11};
					\def\Reb{3.1};
					\def\Imb{1.7};
					\def\Rex{-6.2};
					\def\Imx{4.4};
					\def\eps{2.3};
					
					\draw[thick,->] (-\xend,0) -- (\xend,0);
						\draw (\xend,0) coordinate[label=below left:{$\R$}] (R);
					\draw[thick,->] (0,-\yend) -- (0,\yupend);
						\draw (0,\yupend) coordinate[label=below right:{$i\R$}] (iR);
					
					\draw (\Reb,\Imb) 							coordinate[label=above right:{$\qb$}] (qb);
					\draw (\Reb,-\Imb)							coordinate (barb);
					\draw (-\Imb,\Reb)							coordinate (ib);
					\draw (\Imb,\Reb)							coordinate (ibarb);
					\draw (0,\Reb)								coordinate (cb);
					\draw (0,-\Reb)								coordinate[label=right:{$-\cb$}] (mcb);
					\draw (\Rex,\Imx)							coordinate[label=above left:{$x$}] (x);
					\draw ($(x)-(cb)+{-((\Imx-\Reb)+\yend)/\Reb}*(ibarb)$)	coordinate[label=above right:{$x + T$}] (blx);
					\draw ($(x)-(cb)+{-((\Imx-\Reb)+\yend)/\Reb}*(ib)$)		coordinate (brx);
					\draw ($(mcb)+{-(\yend-\Reb)/\Reb}*(ibarb)$)				coordinate (bl);
					\draw ($(mcb)+{-(\yend-\Reb)/\Reb}*(ib)$)						coordinate[label=above right:{$T$}] (br);
					
			
				\fill[color=gray,opacity=.2] ($(x)-(cb)$) -- ($(blx)$) -- ($(brx)$) -- cycle;
				\fill[color=gray,opacity=.2] ($(mcb)$) -- ($(bl)$) -- ($(br)$) -- cycle;
								
					\fill (qb) circle[radius=\point];
					\fill (x)	circle[radius=\point];
					
					\pgfmathsetmacro{\len}{int(\yend/\Reb)-1}
					\foreach		\alpha	in {0,...,\len}				{
						\pgfmathsetmacro{\end}{\len-\alpha}
						\foreach	\beta		in {0,...,\end}			{
							\draw	(${-1}*(cb)+{-\alpha}*(ib)+{-\beta}*(ibarb)$)		coordinate
								(p\alpha\beta);
							\fill[color=gray] 		(p\alpha\beta)												circle[radius=\point];
							}
						}
						
					\pgfmathsetmacro{\len}{int((\yend+\Imx)/\Reb)-1}
					\foreach		\alpha	in {0,...,\len}				{
						\pgfmathsetmacro{\end}{\len-\alpha}
						\foreach	\beta		in {0,...,\end}			{
							\draw	($(x)+{-1}*(cb)+{-\alpha}*(ib)+{-\beta}*(ibarb)$)		coordinate
								(p\alpha\beta);
							\draw 		(p\alpha\beta)												circle[radius=\point];
							}
						}
   
				\end{tikzpicture}
			}
			\qquad
			\subfloat[][\label{fig:contour}
				The contour follows $\R \!+\! i h \qb$ ($h < 0$) and deviates along the triangle with tip at $a$.
				The integrand decays quickly in $y$ if $x$ lies in the strip, and the poles lie below $\gamma_{h,a}$ if $-\cb$ and $x$ do.]{
				\begin{tikzpicture}[auto,scale=.16,cm={1,0,0,1,(0,0)},>=latex]
					\tikzstyle{connector}=[thick]
					\def\point{8pt};
					\def\xend{15};
					\def\yupend{11};
					\def\yend{11};
					\def\Reb{4};
					\def\Imb{2};
					\def\Rea{-2};
					\def\Ima{7.5};
					\def\eps{1.4};
					
					\draw[thick,->] (-\xend,0) -- (\xend,0);
						\draw (\xend,0) coordinate[label=below left:{$\R$}] (R);
					\draw[thick,->] (0,-\yend) -- (0,\yupend);
						\draw (0,\yupend) coordinate[label=below right:{$i\R$}] (iR);
					
					\draw (\Reb,\Imb) 							coordinate[label=above:{$\qb$}] (qb);
					\draw (\Reb,-\Imb)							coordinate (barb);
					\draw (-\Imb,\Reb)							coordinate (ib);
					\draw (\Imb,\Reb)							coordinate (ibarb);
					\draw (0,\Reb)								coordinate (cb);
					\draw (0,-\Reb)								coordinate[label=below left:{$-\cb$}] (mcb);
					\draw (\Rea,\Ima)					coordinate[label=above left:{$a$}] (a);
					\draw ($(a)+{-{\Ima/\Reb}-\eps}*(ibarb)$)			coordinate (cross1);
					\draw ($(a)+{-{\Ima/\Reb}-\eps}*(ib)$)				coordinate (cross2);
					\draw ($(a)+{-{\Ima/\Reb}+\eps}*(ibarb)$)			coordinate (par1);
					\draw ($(a)+{-{\Ima/\Reb}+\eps}*(ib)$)				coordinate (par2);
					\draw ($(a)+{-(\Ima/\Reb)-((.5)*\eps)}*(ib)$)		coordinate (par3);
					\draw ($(a)+{-(\Ima/\Reb)-((.5)*\eps)}*(ibarb)$)	coordinate (par4);
							
					\fill[color=gray,opacity=.2] (-\xend,-.5*\eps*\Reb) -- (\xend,-.5*\eps*\Reb) -- (\xend,\eps*\Reb) -- (-\xend,\eps*\Reb) -- cycle;
					\fill[color=gray,opacity=.2]	($(-\xend,0)+{-\eps}*(cb)$) -- node[below] {$\gamma_{\varepsilon,a}$} ($(cross1)$) -- ($(a)$) -- ($(cross2)$) -- ($(\xend,0)+{-\eps}*(cb)$) -- (\xend,-\yend) -- (-\xend,-\yend) -- cycle;
					\fill[color=gray,opacity=.2] ($(par1)$) -- ($(par2)$) -- ($(par3)$) -- ($(par4)$) -- cycle;
						
					\draw ($(-\xend,0)+{\eps}*(cb)$) -- ($(\xend,0)+{\eps}*(cb)$);
					\draw ($(-\xend,0)+{-{.5}*\eps}*(cb)$) -- ($(\xend,0)+{-{.5}*\eps}*(cb)$);
					\draw[ultra thick] ($(-\xend,0)+{-\eps}*(cb)$) -- node[below] {$\gamma_{\rho,a}$} ($(cross1)$) -- ($(a)$) -- ($(cross2)$) -- ($(\xend,0)+{-\eps}*(cb)$);
					
					\fill (qb) circle[radius=\point];
					\fill (0,-\Reb)	circle[radius=\point];
					\draw (\Rea,1.2)	node		(R)	{$R_{h,a}$};
			
				\end{tikzpicture}
			}
			\caption{The distribution of the poles of the integrand and the contour are illustrated for $N=1$. The situation is analogous for higher $N$, up to rescaling $\cb$ by $\sqrt{N}$ and replicating the picture $N$ times.}
			\label{fig:convergence}
		\end{figure}
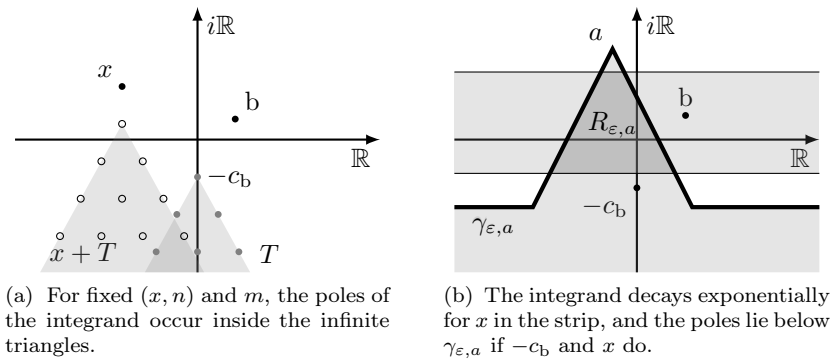
		
		\begin{Proposition}
			\label{prop:conv41}
			Suppose $h < 0$ and $a \in - T - \frac{\cb}{\sqrt{N}}$ are fixed.
			For every $\xn \in \ANC$ with $x \in R_{h,a}$, the integral
			\begin{equation}
				\chi_{h, a} (\xn) \coloneqq \int_{\gamma_{h, a}} \Phi(\xn, \yn) \dif \yn
			\end{equation}
			is absolutely convergent.
			The function $\chi_{h, a}$ is holomorphic, and if $x \in R_{h, a} \cap \R$ then
			\begin{equation}
				e^{4 \pi i \frac{\cb x}{\sqrt{N}}} \chi_{h, a} (\xn) = \AKJs[\knot] (\xn) \, .
			\end{equation}
		\end{Proposition}
		
		\begin{proof}
			For fixed $\xn$, the singularities of $\Phi(\xn, \yn)$ lie at the zeroes of $\philog(\yn)$ and poles of $\philog(\xn - \yn)$.
			These occur for $y$ in $T$ and $T+x$, respectively, both of which are contained in $T + a$ if $a \in -T - \frac{\cb}{\sqrt{N}}$ and $x \in T + a$.
			Therefore, under these conditions, the contour $\gamma_{h,a}$ avoids all the singularities of the integrand.
			
			In order to study the behaviour of $\Phi$ at infinity, express $x$ and $y$ as $\xi + \lambda \frac{i\qb}{\sqrt{N}}$ and $\eta + \rho \frac{i\qb}{\sqrt{N}}$, respectively---such expressions exist uniquely since $\Re(\qb) > 0$.
			Using Lemma~\ref{lemma:dilog_asymptotics} and expanding the definition of $\braket{\yn}$ and $\braket{\xn-\yn}$~\eqref{eq:FourierGaussian} we see that
			\begin{equation}
				\label{eq:asymptotic_41}
				\abs{\Phi(\xn,\yn)} \approx
				\begin{cases}
					\abs{\displaystyle \frac{\braket{\yn}^{2}}{\braket{\xn-\yn}}}
						= C_{-} (\xn, \rho) e^{-2\pi \eta (\rho + \lambda) \frac{\Re(\qb)}{\sqrt{N}}}
						& \text{for $\eta \to - \infty$} \\[1em]
					\abs{\displaystyle \frac{\braket{\yn}}{\braket{\xn-\yn}^{2}}}
						= C_{+} (\xn, \rho) e^{2\pi \eta (\rho - 2 \lambda) \frac{\Re(\qb)}{\sqrt{N}}}
						& \text{for $\eta \to + \infty$}
				\end{cases}
			\end{equation}
			for some continuous functions $C_{\pm}$ of $\xn$ and $\rho$ alone.
			We then see that, for $\rho = h$, the integrand decays exponentially near $- \infty$ if $\lambda < - \rho$ and near $+ \infty$ if $\lambda > \frac{\rho}{2}$.
			When that is the case, the integral is absolutely convergent, which proves the first part of our statement.
			
			For the second part, suppose that $\xn_{0} = (x_{0}, n_{0})$ is fixed with $x_{0} \in R_{h,a}$, and that $B \subset R_{h, a}$ is a compact neighbourhood of $x_{0}$.
			By~\eqref{eq:asymptotic_41} it is then easy to bound $\abs{\Phi(\xn,\yn)}$ by an absolutely integrable function of $\yn$ alone, uniformly for all $x \in B$, whence the continuity of $\chi_{h,a}$ on that region by dominated convergence.
			Furthermore, for any closed contour $\alpha$ inside $B$, the same bound justifies the use of Fubini-Tonelli in
			\begin{equation}
				\int_{\alpha} \chi_{h, a} (\xn) \dif x = \int_{\gamma_{h,a}} \int_{\alpha} \Phi(\xn,\yn) \dif x \dif \yn \, .
			\end{equation}
			The holomorphicity of $\chi_{h,a}$ follows then from Morera's theorem.
			
			For the final part, suppose that $\xn$ is fixed, with $x \in R_{h,a} \cap \R$, which is to say that $\lambda = 0$.
			For a positive real number $M$ consider, for each component in $\ANC$, the compact region $D$ enclosed by $\AN$, $\gamma_{h,a}$, and the lines $i\qb \R \pm M$.
			Since $\Phi(\xn,\yn)$ is holomorphic in $y$ on $D$, the integral of $\Phi(\xn,\yn) \dif y$ around $\partial D$ vanishes.
			Using~\eqref{eq:asymptotic_41} again, the integrand is bounded by $2 C_{\pm} (\xn,\rho)$ on the components of $\partial D$ along $i \qb \R \pm M$, uniformly in $M$.
			It is then easy to see that the corresponding contributions vanish in the limit for $M \to \infty$, showing that
			\begin{equation}
				\int_{\AN} \Phi(\xn, \yn) \dif \yn = \int_{\gamma_{h,a}} \Phi(\xn,\yn) \dif \yn \, ,
			\end{equation}
			which concludes our proof.
		\end{proof}
		
		The proposition vindicates the claim that $\AKJs[\knot]$ extends holomorphically to $\ANC$.
		It is now also clear that, for $h$ and $a$ as usual, $h' = h-1$, $a' = a - \frac{i\qb}{\sqrt{N}}$, and $x \in R_{h',a'}$, we have
		\begin{equation}
			e^{4 \pi i \frac{\cb x}{\sqrt{N}}} \int_{\gamma_{h,a}} \ly \Phi(\xn,\yn) \dif \yn
			= e^{4 \pi i \frac{\cb x}{\sqrt{N}}} \int_{\gamma_{h',a'}} \Phi(\xn,\yn) \dif \yn 
			= \AKJs[\knot] (\xn) \, ,
		\end{equation}
		and that if both $x, x - \frac{i\qb}{\sqrt{N}} \in R_{h,a}$ then
		\begin{equation}
			e^{4 \pi i \frac{\cb x}{\sqrt{N}}} \int_{\gamma_{h,a}} \lx \Phi (\xn,\yn) \dif \yn = q^{-1} \lx \AKJs[\knot](\xn) \, .
		\end{equation}
		Up to choosing $h$ and $a$ sufficiently large, we see then that
		\begin{equation}
			0 = e^{4 \pi i \frac{\cb x}{\sqrt{N}}} \int_{\gamma_{h,a}} P_{q} \bigl(\lx,\mx,\ly \bigr) \Phi (\xn,\yn) \dif \yn = P_{q} \bigl(q^{-1}\lx,\mx,1 \bigr) \AKJs[\knot] (\xn)
		\end{equation}
		on some open subset of $\ANC$, and therefore $P_{q} (q^{-1}\lx,\mx,1)$ annihilates $\AKJs[\knot]$.
		
		We are now ready to prove Conjecture~\ref{conj:complexAJ} for $\knot$.
		
		\begin{Theorem}
			Conjecture~\ref{conj:complexAJ} holds for the figure-eight knot.
		\end{Theorem}
		
		\begin{proof}
			Call $P' = P'_{q} (E,Q) \coloneqq P_{q} (q^{-1} E, Q, 1)$, so that $P'_{q}(\lx, \mx) \AKJs[\knot] = 0$.
			We can see by direct comparison of~\eqref{eq:Ahat41_found} with~\eqref{eq:Ahat41_known} and~\eqref{eq:clA41} that
			\begin{equation}
				q P'_{q} (E, Q) (Q-1) = \widehat{A}_{q,4_{1}}^{\text{nh}} (E, Q)
				\qquad \text{and} \qquad
				P'_{1}(\ell, m^{2}) = \bigl(m^{4} -1 \bigr) A_{\knot} (\ell, m) \, .
			\end{equation}
			By definition, $\widehat{A}_{q,\knot}^{\Cx}$ is the preferred generator of $\locideal[\knot]$, and therefore
			\begin{equation}
				P' = p \widehat{A}^{\Cx}_{q,\knot}
			\end{equation}
			for some $p \in \localg$.
			In the evaluation at $q = 1$, the above gives a factorisation of $(m^{4} -1) A_{\knot}$, and since $A_{\knot}$ is known to be irreducible it follows that only one between $p$ and $\widehat{A}_{q,\knot}^{\Cx}$ can contain the variable $E$.
			On the other hand, if $\widehat{A}_{q,\knot}^{\Cx}$ were a polynomial of $Q$ alone it would follow that $\AKJs[\knot] = 0$, a contradiction.
			Therefore, $p$ is a non-zero polynomial in $Q$, so we can write $\widehat{A}^{\Cx}_{q,\knot} = p^{-1} P'$ in $\localg$.
			Since $q P'$ has integer and co-prime coefficients, it follows that $p = q^{-1}$ and $\widehat{A}^{\Cx}_{q,\knot} = q P'$, which as we have seen satisfies all the claimed properties.
		\end{proof}
		
	\subsection{The knot \texorpdfstring{$5_2$}{5 2}}
		\def\knot{5_2}
		\def\AKinv{\chi_{5_2}}

		The discussion for $\knot$ is similar.
		From~\cite{AM16} we have
		\begin{equation}
			\label{eq:AK52}
			\AKJs[\knot] (\xn) = e^{2 \pi i \frac{\cb x}{\sqrt{N}}} \int_{\AN} \frac{\Braket{\yn} \inv{\Braket{\xn}}}{\philog(\yn + \xn) \philog(\yn) \philog(\yn-\xn)} \, \dif \yn \, .
		\end{equation}
		We will again call $\Phi = \Phi(\xn,\yn)$ the integrand and see that
		\begin{equation}
			\begin{aligned}
				\lx \Phi ={}& q^{\frac{1}{2}} \mx \inv{\bigl( 1 + q^{-\rec{2}} \mx^{-1} \my^{-1} \bigr)} \bigl( 1 + q^{\rec{2}} \mx \my^{-1} \bigr) \Phi \, , \\
				\ly \Phi ={}& q^{-\rec{2}} \my^{-1} \inv{ \bigl( 1 + q^{- \rec{2}} \mx^{-1} \my^{-1} \bigr)} \inv{ \bigl( 1 + q^{-\rec{2}} \my^{-1} \bigr)} \inv{ \bigl( 1 + q^{-\rec{2}} \my^{-1} \mx \bigr)} \Phi \, .
			\end{aligned}
		\end{equation}
		Therefore, the annihilator of $\Phi$ in $\localg^{\otimes 2}$ contains
		\begin{equation}
			\begin{aligned}
				g_{1} \coloneqq{}& q^{\rec{2}} \bigl( Q_{1}E_{1} - q^{\frac{1}{2}} Q_{1}^{2} \bigr) Q_{2} + E_{1} - q^{\frac{3}{2}} Q_{1}^{3} \, , \\
				g_{2} \coloneqq{}& E_{2}Q_{1}Q_{2}^{3} + q^{\rec{2}} \Bigl( E_{2}Q_{1}^{2} + E_{2}Q_{1} - q^{2} Q_{1} + E_{2} \Bigr) Q_{2}^{2}
									+ q E_{2} \Bigl( Q_{1}^{2} + Q_{1} + 1 \Bigr) Q_{2} + q^{\frac{3}{2}} E_{2}Q_{1} \, .
			\end{aligned}
		\end{equation}
		By eliminating $E_{2}$ we find that the element
		\begin{equation}
			\begin{split}
				P ={}& P_{q} (E_{1}, Q_{1}, E_{2}) \\ \coloneqq{}
					& - q^{\rec{2}} \bigl( q Q_{1}^{2} - 1 \bigr) \bigl( q^{2} Q_{1}^2 - 1 \bigr) E_{1}^{3} \\
					& + q \bigl( q Q_{1}^{2} - 1 \bigr) \bigl( q^{4} Q_{1}^{2} - 1 \bigr) \Bigr( q^{9} E_{2}Q_{1}^{5} - q^{7} E_{2}Q_{1}^{4} - q^{4} ( q^{3} + 1) E_{2}Q_{1}^{3} \\
					& \quad + q^{5}(q + 1) Q_{1}^{3} + q^{2} (q^{3} + 1) E_{2}Q_{1}^{2} + q^{2} ( E_{2} + 1 ) Q_{1} - E_{2} \Bigr) E_{1}^{2} \\
					& +  q^{\frac{9}{2}} Q_{1}^{2} \bigl( q^{2} Q_{1}^{2} - 1 \bigr) \bigl( q^{5} Q_{1}^{2} - 1 \bigr) \Bigl( q^{6} E_{2}Q_{1}^{5} - q^{5} ( E_{2} + 1) Q_{1}^{4} \\
					& \quad - q^{2} ( q^{3} + 1) E_{2}Q_{1}^{3} + q (q^{3} E_{2} - q^{2} - q + E_{2}) E_{2}Q_{1}^{2} + q E_{2}Q_{1} - E_{2} \Big) E_{1} \\
					& + q^{8} Q_{1}^{7} \bigl( q^{4} Q_{1}^{2} - 1 \bigr) \bigl( q^{5} Q_{1}^{2} - 1 \bigr)
			\end{split}
		\end{equation}
		may be expressed as
		\begin{equation}
			P = a_{1} g_{1} + Q_{1} g_{2}
		\end{equation}
		with
		\begin{equation}
			\begin{split}
				a_{1} ={}&
				\Bigl( -q^{\rec{2}} E_{2} Q_{1} \bigl( q Q_{1}^{2} - 1 \bigr) \bigl( q^{2} Q_{1}^{2} - 1 \bigr) E_{1}^{2} + q^{2} (q + 1) E_{2} Q_{1}^{2} \bigl( q Q_{1}^{2} - 1 \bigr) \bigl( q^{5} Q_{1}^{2} - 1 \bigr) E_{1} \\
				& \quad - q^{\frac{7}{2}} E_{2} Q_{1}^{3} \bigl( q^{4} Q_{1}^{2} - 1 \bigr) \bigl( q^{5} Q_{1}^{2} - 1 \bigr) \Bigr) Q_{2}^{2} \\
				& + \Bigl( - q Q_{1} \bigl( q Q_{1}^{2} - 1 \bigr) \bigl( q^{2} Q_{1} - 1 \bigr) \bigl(q^{3} E_{2}Q_{1} + E_{2} - q^{2} \bigr) E_{1}^{2} \\
				& \quad + q^{\frac{5}{2}} Q_{1} \bigl( q Q_{1}^{2} - 1 \bigl) \bigl( q^{5} Q_{1}^{2} - 1 \bigr)  \Bigl( q^{3} E_{2}Q_{1}^{2} + (q + 1) E_{2}Q_{1} - q^{2} ( q + 1) Q_{1} + E_{2} \Bigr) E_{1} \\
				& \quad - q^{8} Q_{1}^{2} \bigl( q^{4} Q_{1}^{2} - 1 \bigr) \bigl( q^{5} Q_{1}^{2} - 1 \bigr) \bigl( E_{2}Q_{1} - q^{2} Q_{1} + E_{2} \bigl) \Bigr) Q_{2} \\
				& - q^{\frac{1}{2}} \bigl( q Q_{1}^{2} - 1 \bigr) \bigl( q^{2} Q_{1}^{2} - 1 \bigr) \bigl( q^{4} E_{2}Q_{1}^{2} + 1 \bigr) E_{1}^{2} - q  \bigl( q Q_{1}^{2} - 1 \bigr) \bigl( q^{5} Q_{1}^{2} - 1 \bigr) \\
				& \quad \cdot \Bigl( q^{6} E_{2}Q_{1}^{4} - q^{5} E_{2}Q_{1}^{3} - q^{5} Q_{1}^{3} - q^{2} ( q^{2} + 1 ) E_{2}Q_{1}^{2} - q^{2} E_{2}Q_{1} - q^{2} Q_{1} + E_{2} \Bigr)E_{1} \\
				& - q^{\frac{9}{2}} Q_{1}^{2} \bigl( q^{2} Q_{1}^{2} + E_{2} \bigr) \bigl( q^{4} Q_{1}^{2} - 1 \bigr) \bigl( q^{5} Q_{1}^{2} - 1 \bigr)
			\end{split}
		\end{equation}
		and
		\begin{equation}
			\begin{split}
				a_{2} ={}& 
				\bigl( q Q_{1}^{2} - 1 \bigr) \bigl( q^{2} Q_{1}^{2} - 1 \bigr) E_{1}^{3} - q^{\frac{3}{2}} Q_{1} \bigl( q^{2} + q + 1 \bigr) \bigl( q Q_{1}^{2} - 1 \bigr) \bigl( q^{4} Q_{1}^{2} - 1 \bigr) E_{1}^{2} \\
				& + q^{3} Q_{1}^{2} \bigl( q^{2} + q + 1 \bigr) \bigl( q^{2} Q_{1}^{2} - 1 \bigr) \bigl( q^{5} Q_{1}^{2} - 1 \bigr) E_{1} \\
				& - q^{\frac{9}{2}} Q_{1}^{3} \bigl( q^{4} Q_{1}^{2} - 1 \bigr) \bigl( q^{5} Q_{1}^{2} - 1 \bigr) \, .
			\end{split}
		\end{equation}
		
		\begin{Proposition}
			Let $h < 0$ and $a \in -T - \frac{\cb}{\sqrt{N}}$ be fixed, $\gamma_{h,a}$ as in~\eqref{eq:gamma_ah}, and
			\begin{equation}
				R_{a} \coloneqq \Set{ (x,n) \in \ANC \colon x \in \Bigl( T + \frac{\cb}{\sqrt{N}} + a \Bigr) \cap \Bigl( -T - \frac{\cb}{\sqrt{N}} -a \Bigr) } \, .
			\end{equation}
			For every $\xn \in R_{a}$, the integral
			\begin{equation}
				\chi_{h,a} (\xn) \coloneqq \int_{\gamma_{h,a}} \Phi(\xn,\yn) \dif \yn
			\end{equation}
			is absolutely convergent.
			The function $\chi_{h,a}$ is holomorphic, and if $\xn \in R_{a} \cap \AN$ then
			\begin{equation}
				e^{2 \pi i \frac{\cb x}{\sqrt{N}}} \chi_{h,a} (\xn) = \AKJs[\knot] (\xn) \, .
			\end{equation}
		\end{Proposition}
		
		\begin{proof}
			For fixed $\xn$, every pole of $\Phi(\xn,\yn)$ has $y \in (T-x) \cup T \cup (T+x)$.
			A simple check shows that
			\begin{equation}
				x \in \pm \Bigl(T+\frac{\cb}{\sqrt{N}}+a \Bigr) \implies T \pm x \subset T + \frac{\cb}{\sqrt{N}} + a \, ,
			\end{equation}
			respectively.
			Therefore, if $\xn \in R_{a}$ and $a \in - T - \frac{\cb}{\sqrt{N}}$, then all the poles of $\Phi(\xn, \yn)$ lie inside $T + \frac{\cb}{\sqrt{N}} + a$, and in particular strictly below $\gamma_{h,a}$.
			
			Writing $x = \xi + \lambda \frac{i \qb}{\sqrt{N}}$ and $y = \eta + \rho \frac{i \qb}{\sqrt{N}}$, and using Lemma~\ref{lemma:dilog_asymptotics}, we see that
			\begin{equation}
				\abs{\Phi(\xn, \yn)} \approx
				\begin{cases}
					\abs{\braket{\yn} \braket{\xn}^{-1}} = C_{-} (\xn, \rho) e^{-2 \pi \eta \rho \frac{\Re(\qb)}{\sqrt{N}}}
					& \text{for } \eta \to -\infty	\\[.8em]
					\abs{\displaystyle \frac{\braket{\xn}^{-1}}{\braket{\yn+\xn} \braket{\yn-\xn}}}
					= \abs{\displaystyle \frac{1}{\braket{\yn}^{2} \braket{\xn}^{3}}}
					= C_{+} (\xn, \rho) e^{4\pi \eta \rho \frac{\Re(\qb)}{\sqrt{N}}}
					& \text{for } \eta \to +\infty
				\end{cases}
			\end{equation}
			for appropriate continuous functions $C_{\pm}$.
			Therefore, $\Phi$ decays exponentially at infinity along $\gamma_{h,a}$ as long as $h < 0$, regardless of the value of $\xn$, establishing absolute convergence of the integral.
			
			The rest of the proof is essentially identical to that of Proposition~\ref{prop:conv41}.
		\end{proof}
		
		As in the case of $4_{1}$, we may conclude that each $\chi_{h,a}$ is the holomorphic extension of $\AKJs[\knot]$ on $R_{a}$, and that
		\begin{equation}
			e^{2 \pi i \frac{\cb x}{\sqrt{N}}} \int_{\gamma_{h,a}} \ly^{d} \Phi(\xn,\yn) \dif \yn = \AKJs[\knot] (\xn)
		\end{equation}
		for every $d \in \Z_{\geq 0}$ provided that $h < -d$ and $\xn \in R_{a}$, and that
		\begin{equation}
			e^{2 \pi i \frac{\cb x}{\sqrt{N}}} \int_{\gamma_{h,a}} \lx \Phi(\xn,\yn) \dif \yn
			= e^{2 \pi i \frac{\cb x}{\sqrt{N}}} \lx \chi_{h,a} 
			= - q^{-\frac{1}{2}} \lx \AKJs[\knot] (\xn)
		\end{equation}
		if $\xn$ and $\xn + \frac{i\qb}{\sqrt{N}}$ lie in $R_{a}$.
		Choosing $h$ and $a$ appropriately, we conclude that
		\begin{equation}
			0 = \int_{\gamma_{a,h}} P_{q} \bigl(\lx, \mx, \ly\bigr) \Phi(\xn, \yn) \dif \yn = P_{q} \bigl( - q^{-\frac{1}{2}} \lx, \mx, 1\bigr) \AKJs[\knot] (\xn)
		\end{equation}
		on some open subset, so $P_{q} \bigl(-q^{-\frac{1}{2}} E, Q, 1\bigr) \in \opideal[{\AKJs[\knot]}]$.
		
		\begin{Theorem}
			Conjecture~\ref{conj:complexAJ} holds for the knot $5_{2}$.
		\end{Theorem}
		
		\begin{proof}
			Calling $P_{q}' (E, Q) \coloneqq P_{q} \bigl(-q^{-\frac{1}{2}} E, Q, 1 \bigr)$, we see that $P_{q}' \bigl( \lx, \mx \bigr) \AKJs[\knot]$ and
			\begin{equation}
				q P_{q}' (E,Q) (Q-1) = \widehat{A}^{\text{nh}}_{q, \knot} (E,Q)
				\qquad \text{and} \qquad
				P'_{1} (\ell, m^{2}) = (m^{4}-1)^{2} A_{\knot} (\ell, m) \, .
			\end{equation}
			The conclusion that $\widehat{A}_{q,\knot}^{\Cx} = q P'$ follows by the same argument as for $4_{1}$.
		\end{proof}
		

\todos

\end{document}